\newtheorem{theorem}{Theorem}[section]
\newtheorem{lemma}[theorem]{Lemma}
\newtheorem{example}[theorem]{Example}
\newtheorem{remark}[theorem]{Remark}
\theoremstyle{definition}
\let\oldcolor\color
\renewcommand{\color}[1]{\oldcolor{black}}
\newtheorem{proposition}[theorem]{Proposition}
\newtheorem{corollary}[theorem]{Corollary}
\theoremstyle{definition}
\newtheorem{definition}[theorem]{Definition}
\theoremstyle{remark}
\numberwithin{equation}{section}
\newcommand{\m}{{}^{-1}}
\newcommand{\dom}{{\rm dom}}
\newcommand{\Z}{\mathbb{Z}}
\newcommand{\R}{{\mathbb R}}
\begin{document}
	
	\title{On $\Gamma$-embeddings and partial actions on function spaces}
	
	\author{Luis A. Mart\'inez-Sánchez, H\'ector Pinedo and Jose L. Vilca-Rodríguez
		\\[3pt] }
	
	\address{Departamento de  Matem\'aticas,
		Facultad de Ciencias, Universidad Nacional Aut\'onoma de M\'exico, Mexico City,  Mexico}
	\email{luchomartinez9816@hotmail.com}
	
	\address{Departamento de  Matem\'aticas,
		Universidad Industrial de Santander, Bucaramanga, Colombia}
	\email{hpinedot@uis.edu.co}

	\address{Intituto de Matemática e Estatística, Universidade de São Paulo, São Paulo, Brazil}
	\email{jvilca@ime.usp.br}

	\thanks {{\it 2020 Mathematics Subject Classification}. 54H15, 54C55, 20M99.  }
	\thanks{{\it  Key words and phrases.}  Nice partial action, enveloping space, $\Gamma$-embedding, premorphism. }
	
	\begin{abstract}  
		This paper deals with the extension of partial actions of topological groups on topological spaces. Within this framework, we introduce a class of topological embeddings defined via the inverse semigroup of homeomorphisms between open subsets of a topological space. We describe several embeddings of this type, referred to as $\Gamma$-embeddings, and we place particular emphasis on one of them. In particular, we prove that every topological space $Y$ admits a $\Gamma$-embedding into the space of continuous functions $C(X,Y)$, equipped with the compact-open topology, where $X$ is a compact space. Consequently, any partial action $\theta$ of a topological group $G$ on $Y$ naturally induces a partial action $\hat{\theta}$ on $C(X,Y)$. Throughout the paper, we investigate various relationships between these actions, as well as between their corresponding globalizations and enveloping spaces.
	\end{abstract}
	
	\maketitle\markboth{Luis A. Martínez-Sánchez, Héctor Pinedo and Jose L. Vilca-Rodríguez}{Partial actions on  the spaces of continuos functions and $\Gamma$-embeddings}

\section {Introduction}
	
	Given an action $\beta : G \times X \to X$ of a group $G$ on a set $X$ and an invariant subset $Y \subset X$ (that is, $\beta(g,y)\in Y$ for all $y\in Y$ and $g\in G$), the restriction of $\beta$ to $G\times Y$ yields an action of $G$ on $Y$. In contrast, when $Y$ is not invariant, one obtains what is known as a partial action on $Y$. More precisely, a partial action is given by a family of bijections $\{\theta_g\}_{g\in G}$ defined on subsets of $Y$ such that $\theta_e=\mathrm{id}_Y$ and, for all $g,h\in G$, the composition $\theta_g\circ\theta_h$ is a restriction of $\theta_{gh}$, where $\theta_g\circ\theta_h$ is defined on the largest possible domain on which this composition makes sense.
	
	To the best of our  knowledge,  a systematic study of partial actions (as we know them now) first appeared in the doctoral thesis of Palais~\cite{Palais1957}, where they arose as maximal  local actions of Lie groups on smooth manifolds. Subsequently, partial group actions  were used in the theory of $C^*$-algebras, providing a fundamental tool to characterize important classes of algebras, as  more general crossed products. The first result in this direction  was obtained  by  Exel in~\cite{exel1994}. In addition, Kellendonk and Lawson~\cite{KL} pointed out the importance of partial actions to several branches of mathematics, including $\mathbb{R}$-trees, Fuchsian groups, presentations of groups, model theory and other topics. Consequently, partial actions have been studied by different authors in several areas. For a comprehensive overview of their development, we refer the reader to the surveys by Dokuchaev~\cite{dokuchaev2011,dokuchaev2019} and to Exel’s monograph~\cite{RB}.
	
	A relevant problem is whether a given partial action can be obtained as the restriction of a corresponding collection of total maps on some bigger space (see Example \ref{induc}). In the topological framework, this problem was first investigated by Abadie \cite{AB} and independently	by Kellendonk and Lawson \cite{KL}. They showed that for any topological partial action of a topological group $G$	on a space $Y$, there exist a  space $X$ and a continuous action $\beta$ of $G$ on $X$ such that $Y$ is a subspace of $X$ and the partial action $\theta$ coincides with the restriction of $\beta$ to $Y$. Such a space $X$ is called a globalization of $Y$. It was also shown	that there is a minimal globalization,  called the enveloping space. However, structural properties of $Y$ need not be preserved under globalization. For instance, Abadie~\cite{AB} proved that the enveloping action of a partial action defined on a Hausdorff space may fail to act on a Hausdorff space (see Proposition 1.2 and Example 1.4 in  \cite{AB}). This observation motivates the problem of identifying which structural properties are inherited by globalizations; further progress in this direction can be found in several works, for example  in \cite{RGG, MP, MPR, MPV1, MPV2, PU1, S}.

	On the other hand,  topological properties of spaces of continuous functions have been extensively studied, and recently group actions on them have drawn the attention of several researchers,  \cite{BC, SAH, SAH2, R}. Thus, in this work, we focus on partial actions of continuous maps with compact domains. One of our motivations comes from the well-known result  that  establishes a  correspondence between partial actions on a Hausdorff locally compact space $X$ and the partial actions on the $C^*$-algebra $C_0(X)$, consisting of  continuous functions with values in $\mathbb{R}$  that vanish at infinity. This result  was obtained in  \cite{AB2}, and  allowed the author to obtain a  groupoid endowed with a Haar system whose $C^*$-algebra agrees.  Furthermore, in  \cite{GRO} the authors used the ring of continuous maps $C(X)$  as a tool to relate topological partial actions of $X$ with algebraic partial actions on $C(X),$ obtaining important relations between certain structural  properties  of  the space $X$  and  the ring $C(X).$

	We now outline the structure of the article. After this introduction, Section~\ref{pre} presents the basic notions of topological partial actions and globalizations. It is worth noting that a classical problem concerns the extension of group actions to larger spaces. For example, the classical Mackey \& Horth theorem  asserts that if $H$ is a Polish group and $G\subset H$ is a closed subgroup, then any action of $G$ on a Polish space $X$ can be extended to an action of $H$ on a Polish space $Y$ such that $X$ is closed in $Y$. In the realm of partial actions, an analogue of this result was obtained in~\cite[Theorem~4.6]{PU1}. With this motivation in mind, after presenting the notion of topological partial actions in terms of unital premorphisms, we introduce in Definition~\ref{cemb} the notion of a $\Gamma$-embedding.
	
	We then show in Proposition~\ref{homosemi1} that any topological space $X$ is $\Gamma$-embedded in the hyperspace $\mathcal K(X)$ of all nonempty compact subsets of $X$ endowed with the Vietoris topology. Moreover, Proposition~\ref{cone} establishes that $X$ is also $\Gamma$-embedded in its cone $\mathrm{Cone}(X)$. Using Proposition~\ref{motiva1}, partial actions can be induced from $X$ to each of these spaces; in the case of $\mathcal K(X)$, this construction was previously considered by the first two authors in~\cite{MPR}. Given the relevance of spaces of continuous maps to the theory of partial actions, Section~\ref{pafun} is devoted to the study of partial actions on $C(X,Y)$, where $X$ is a compact space and $Y$ is a topological space. Motivated by the proof of Proposition~\ref{homosemi1}, we prove in Theorem~\ref{YencajaenC(X,Y)} that the space $Y$ is $\Gamma$-embedded in $C(X,Y)$. This result allows us to construct, in a natural way, a topological partial action $\hat{\theta}$ of a group $G$ on $C(X,Y)$ from a given topological partial action $\theta$ of $G$ on $Y$, as described in equation~\eqref{eq paC(x,y)}.
	
	Furthermore, Propositions~\ref{continuidad} and~\ref{nice} relate structural properties of $\theta$ and $\hat{\theta}$, such as continuity and the property of being nice. Subsection~\ref{globali} is devoted to the study of separability properties of the corresponding enveloping spaces $Y_G$ and $C(X,Y)_G$. In particular, Theorem~\ref{embed} shows that, assuming $\theta$ is a nice partial action, the space $C(X,Y)_G$ is $G$-homeomorphic to an open subset of $C(X,Y_G)$. This result enables us to relate regularity and metrizability properties of the spaces $Y_G$ and $C(X,Y)_G$ in Corollary~\ref{regme}. Finally, Subsection~\ref{examp} presents an example in which the spaces $C(X,Y)_G$ and $C(X,Y_G)$ are $G$-homeomorphic. The article concludes with a study of the ANE condition, where we show that this property is equivalent for each of the spaces $Y$, $C(X,Y)$, $Y_G$, and $C(X,Y)_G$. Several examples are provided to illustrate our results.

\section{Partial actions and globalization}\label{pre} 

Throughout this work, the letter $G$ will denote a group with identity element $e$. If $G$ is a topological group, we suppose that $G$ is Hausdorff.

Let $X$ be a set. Consider a partially defined map  $$\theta: G*X\subset G\times X\ni (g,x)\mapsto \theta(g,x)=g\cdot x\in  X.$$ 
Following the convention in \cite{KL}, we write $\exists g\cdot x$  if  $(g,x)\in G*X$. We say that $\theta$ is a (set-theoretic) partial action of $G$ on $X$ if, for every $g,h \in G$ and $x \in X$, the following conditions hold:	

\begin{enumerate}
	\item [(PA1)] If $\exists\ g\cdot x$, then $\exists\ g^{-1}\cdot (g\cdot x)$ and          $g^{-1}\cdot (g\cdot x)=x$.\vspace{0.2cm}
	\item [(PA2)] If $\exists\ g\cdot(h\cdot x)$, then $\exists\ (gh)\cdot x$ and             $g\cdot(h\cdot x)=(gh)\cdot x$.\vspace{0.2cm}
	\item [(PA3)] $\exists\ e\cdot x$ and $e\cdot x=x.$
\end{enumerate}
The partial action  $\theta$ is  called  {\it global} if $G*X= G\times X$.  

Let $\theta:G*X\subset G\times X\to  X$ be a partial action of $G$ on $X$.  For each $g\in G$, define $$X_g=\{x\in X\mid \exists\, g\m \cdot x\},$$
and consider the mapping $$\theta_g\colon X_{g\m}\ni x\mapsto g\cdot x\in X_g.$$
Then $\theta_g$ is a bijection, and the family 
	
\begin{equation}\label{family}
	\{\theta_g\colon X_{g\m}\to X_g \}_{g\in G},
\end{equation}
satisfies:
	
\begin{enumerate}
	\item [(PA1')] $X_e=X$ and $\theta_e={\rm id}_X.$\vspace{0.2cm}
	\item [(PA2')] \( \theta_g(X_{g^{-1}} \cap X_{g^{-1}h}) \subset X_g \cap X_h \) for all \( g, h \in G \), and 
        \( \theta_g \circ\theta_h(x) = \theta_{gh}(x) \) for all \( x \in X_{h^{-1}} \cap X_{h^{-1}g^{-1}} \).
	
\end{enumerate}

Conversely, let $\theta=\{\theta_g \colon X_{g^{-1}} \to X_g\}_{g \in G}$ be a family of bijections satisfying (PA1') and (PA2').  
Define
\[
G * X = \{ (g,x) \in G \times X \mid x \in X_{g^{-1}} \}.
\]

Then one readily verifies that the partial mapping
\[
\theta: G*X\subset G\times X\ni (g,x)\mapsto \theta_g(x)\in X 
\]
defines a partial action of $G$ on $X$.
Moreover,  $\theta$  is  global if and only if $X_g=X$ for every $g\in G.$

Let $G$ be a topological group, $X$ a topological space, and $\theta: G*X\rightarrow X$ a set-theoretic partial action of $G$ on the underlying set $X$. We endow $G\times X$ with the product topology and $G*X$ with the subspace topology.  In this setting, several types of partial actions have been introduced; we highlight below those that are particularly relevant to this work.

We say that a partial action $\theta:=\{\theta_g\colon X_{g\m}\ni x\mapsto g\cdot x\in X_g \}_{g\in G}$ is:
	
	\begin{enumerate}[(i)]
		\item   {\it topological} if $X_g$ is open in $X$ and $\theta_g: X_{g\m}\rightarrow X_g$ is continuous for each $g\in G$;
		\item  {\it continuous } if  $\theta: G*X\to X$ is a continuous map;\vspace{0.2cm}
		\item   {\it nice } if $\theta$ is continuous and $G*X$ is open in $G\times X.$
	\end{enumerate}
	
	Throughout this paper, all partial actions are assumed to be topological. 
	
	A natural way to construct partial actions from global actions is as follows.

\begin{example}[{\bf Induced partial action}]\label{induc}	{\rm
	Let $\mu \colon G\times Z\to Z$ be a continuous global action of a topological group $G$ on a  topological space $Z$, and let $X$ be a nonempty  subset of $Z$.  A  continuous partial action of $G$ on $X$ is obtained by setting:
	\begin{equation*}\label{induced}
		G*X=\{(g,x)\in G\times X\mid \mu(g,x)\in X\}\quad  \text{ and }\quad \theta= \mu\restriction_{G*X}.
	\end{equation*}

Moreover,  $\theta$ is a nice partial action whenever $X$ is open in $Z$. We say that   $\theta$ is  the {\it restriction of $\mu$ to $X$}}.
\end{example}

As mentioned in the introduction, a positive answer to the problem of determining whether a given partial action arises as a restriction of a global action was established in \cite[Theorem 1.1]{AB} and independently in \cite[Section 3.1]{KL}. For the reader's convenience, we recall their construction below.
	
Let $\theta$ be a topological partial action of $G$ on a topological space $X$, and let us consider the following equivalence relation on $ G\times X$:
	\begin{equation}\label{equiv}
		(g,x)R(h,y) \Longleftrightarrow x\in X_{g\m h}\quad \text{and}\quad \theta_{h\m g}(x)=y.
	\end{equation}
	
The equivalence class of $(g,x)$ is denoted by $[g,x]$. The {\it enveloping space}  or the {\it globalization}  of $X$ is the set  $X_G=(G\times X)/R$,  endowed with the quotient topology. By \cite[Proposition 3.9 (iii)]{KL}  the map
	\begin{equation}
		\label{action}
		\mu^\theta \colon G\times X_G\ni (g,[h,x])\to [gh,x]\in X_G,
	\end{equation}
	is a continuous action of $G$ on $X_G$, which is called the {\it enveloping action} of $\theta$. 
	
	The quotient map
	\begin{equation} \label{qo} q^\theta: G\times X\ni (g,x)\mapsto [g,x]\in X_G,
	\end{equation}
	is continuous and  open by \cite[Proposition 3.9 (ii)]{KL}.  Moreover, it follows from  \cite[Proposition 3.9 (i)]{KL} that the map  
	\begin{equation}\label{iota}
		\iota^\theta \colon X\ni x\mapsto [e,x]\in X_G
	\end{equation} 
	is continuous, injective, and $G\cdot \iota(X)=X_G$. 
	
	When no confusion arises, we will omit $\theta$ in (\ref{action}), (\ref{qo}) and (\ref{iota}).
	
	A  {\it $G$-space} is  a pair $(X,\theta)$, where $X$ is a topological space and $\theta$ is a topological  partial action of $G$ on $X$. Given a $G$-space $(Y,\eta)$, a continuous map $f: X\rightarrow Y$ is  a {\it $G$-map} (or an equivariant map) when $(g,f(x))\in G*Y$ and $\eta(g,f(x))=f(\theta(g,x))$, for each $(g,x)\in G*X$. Furthermore, $f$ is a {\it $G$-homeomorphism} if it is a homeomorphism and $f^{-1}$ is also a $G$-map. In this case, the spaces $X$ and $Y$ are called {\it $G$-homeomorphic}.

We now review the relationship between partial group actions and semigroup theory. To this end, we start by recalling the following definition.
	
\begin{definition} 
	Let $X$ be a topological space, and let $\Gamma(X)$ denote the semigroup of homeomorphisms between open subsets of $X$. The multiplication of two maps $\psi$ and $\phi$ in $\Gamma(X)$ is defined by the composition:
$$\psi\circ \phi: \phi^{-1}(\rm {dom}(\psi) \cap \rm {im} (\phi))\to \psi(\rm {dom}(\psi) \cap \rm {im} (\phi)). $$ 
\end{definition}

Note that $\Gamma(X)$ is  an inverse monoid with unit  ${\rm id}_X.$ Moreover, $\Gamma(X)$ is a poset under the natural partial order $\leq$, defined by
\[
\psi \leq \phi \quad \text{if and only if} \quad \psi \text{ is a restriction of } \phi.
\]
According to \cite[p. 88]{KL}, a collection $\theta=\{\theta_g\}_{g\in G}\subset \Gamma(X)$ is a topological partial action of $G$ on $X$ if and only if the mapping $\theta: G\ni g\mapsto \theta_g\in \Gamma(X)$, which we also denote by $\theta$, is a {\it unital premorphism}, that is, $\theta$ satisfies the following conditions:

\begin{center}
	$\theta_e={\rm id}_X,\quad \theta_{g^{-1}}=\theta_g^{-1},\quad \theta_g\circ \theta_h\leq \theta_{gh},$
\end{center}
for all $g,h\in G$. Hence, we shall use the terms topological partial action of $G$ on $X$ and unital premorphism from $G$ to $\Gamma(X)$ interchangeably.

\subsection{Embeddings  of partial actions}

Given a topological space $X$, we investigate the problem of finding a topological space $Z$ and an embedding $c \colon X \to Z$ such that every topological partial action $\theta \colon G*X \to X$  extends to a topological partial action $\hat{\theta} \colon G*Z \to Z$ for which $c$ is a $G$-map.

For any topological space $X$, we  consider the set
\begin{center}
	$\Gamma(X)*X=\{(f,x)\in \Gamma(X)\times X\mid x\in \dom(f)\}$
\end{center}
 and the function
 \begin{center}
 	$\text{ev}^X:\Gamma(X)*X\ni (f,x)\mapsto f(x)\in X.$
 \end{center}

We now introduce a key definition for our purposes.

\begin{definition}\label{cemb}
	Let $X$ and $Z$ be topological spaces. A \emph{$\Gamma$-embedding} of $X$ into $Z$ is a pair $(\sigma,c)$ consisting of a semigroup homomorphism $\sigma:\Gamma(X)\to \Gamma(Z)$ and a topological embedding $c:X\to Z$ such that:
	
	\begin{enumerate}
		\item [(i)] The function 
		\begin{center}
			$\sigma\times c:\Gamma(X)*X\ni (f,x)\mapsto  \big(\sigma(f),c(x)\big)\in \Gamma(Z)\times Z,$
		\end{center}
		satisfies  $\mathrm{im}(\sigma\times c)\subset \Gamma(Z)*Z$.
		\item [(ii)] The  diagram
		\begin{center}
				$\xymatrix{\Gamma(X)*X\ar[d]_-{\text{ev}^X}\ar[r]^-{\sigma\times c}& \Gamma(Z)*Z\ar[d]^-{\text{ev}^Z}\\
					X\ar[r]_-{c}& Z
				}$
		\end{center}
		commutes.
	\end{enumerate}
	
	We say that $X$ is \emph{$\Gamma$-embedded in $Z$} if there exists a $\Gamma$-embedding of $X$ into $Z$.
\end{definition}

\begin{proposition}\label{motiva1}
	\rm{Let $X$ and $Z$ be two topological spaces. Suppose that there exists a $\Gamma$-embedding $(\sigma,c)$ of $X$ into $Z$ such that $\sigma(\mathrm{id}_X)=\mathrm{id}_Z$. Then any topological partial action $\theta:G\to \Gamma(X)$ induces a topological partial action $\hat{\theta}=\sigma\circ\theta: G\to \Gamma(Z)$ of $G$ on $Z$ such that $c$ is a $G$-map.}
\end{proposition}

\begin{proof}
	Since $\sigma$ is a semigroup homomorphism and  $\sigma(\mathrm{id}_X)=\mathrm{id}_Z$, the function $\hat{\theta}=\sigma\circ\theta:G\rightarrow \Gamma(Z)$ is a unital premorphism, and hence a topological partial action of $G$ on $Z$. It remains to show that $c$ is a $G$-map.  To this end, let $g\in G$ and $x\in X$ be such that $\exists\ g\cdot x$. Then $(\theta(g),x)\in \Gamma(X)*X$. In addition,
		\begin{center}
			$\big(\hat{\theta}(g),c(x)\big)=\big(\sigma(\theta(g)),c(x)\big)= (\sigma\times c)\big(\theta(g),x\big)\in\text{im}(\sigma\times c)\subset \Gamma(Z)*Z$
		\end{center}
	which implies that $c(x)\in\text{dom}(\hat{\theta}(g))$. Moreover,
		\begin{center}
			$\hat{\theta}(g)\big(c(x)\big)=\text{ev}^Z\big(\hat{\theta}(g),c(x)\big)=\big(\text{ev}^Z\circ(\sigma\times c)\big)(\theta(g),x)=\big(c\circ \text{ev}^X\big)(\theta(g),x)=c\big(\theta(g)(x)\big)$
		\end{center}
	showing that $c$ is a $G$-map. This concludes the proof.
\end{proof}

\subsection{Some examples of  $\Gamma$-embeddings} We now show how a topological space $X$ can be  $ \Gamma$-embedded in various topological spaces.
	
Let $Z$ be a topological space and   $c:X\to Z$  be  an open embedding. It is not difficult to verify that  $X$ is $\Gamma$-embedded in $Z,$ where the function $\sigma: \Gamma(X)\ni f\mapsto \sigma(f)\in \Gamma(Z)$ is defined by
	 \begin{center}
	 	$\sigma(f): c(\dom (f))\ni c(x)\mapsto  c(f(x))\in c(\mathrm{im} (f)),\quad$ for all $f\in \Gamma(X)$.
	 \end{center}
	 
Consequently, using  \cite[Theorem 3.13 (i)]{KL},  we relate   the concepts of $\Gamma$-embeddings and globalization, as stated in the following result.
	
\begin{proposition}
	Let $X$ be a topological space and $\theta:G*X\to X$ be a nice partial action. Then $X$ is $\Gamma$-embedded in $X_G,$ where  $c=\iota^\theta$ and $\sigma: \Gamma(X)\ni f\mapsto \sigma(f)\in \Gamma(X_G)$ is defined by  
	$$\sigma(f): \iota^\theta(\dom (f))\ni \iota^\theta(x)\mapsto \iota^\theta(f(x))\in \iota^\theta(\mathrm {im} (f)).$$
\end{proposition}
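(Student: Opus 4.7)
The plan is to reduce the proposition to the general observation stated in the paragraph immediately preceding it: whenever $c \colon X \to Z$ is an open embedding, $X$ is $\Gamma$-embedded in $Z$ via the natural formula for $\sigma$. The substantive input for the proposition is therefore the openness of $\iota^\theta$ as an embedding, which is guaranteed for nice partial actions by \cite[Theorem 3.13(i)]{KL}.

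First I would invoke the niceness of $\theta$ together with \cite[Theorem 3.13(i)]{KL} to conclude that $\iota^\theta \colon X \to X_G$ is an \emph{open} embedding. In particular, $\iota^\theta$ is a homeomorphism onto its image, and for every open subset $U$ of $X$, the set $\iota^\theta(U)$ is open in $X_G$. This is the only place where the niceness hypothesis is genuinely used.

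Second, I would check that $\sigma$ is a well-defined semigroup homomorphism into $\Gamma(X_G)$. For each $f \in \Gamma(X)$, both $\iota^\theta(\dom(f))$ and $\iota^\theta(\mathrm{im}(f))$ are open in $X_G$ by the previous step, and $\sigma(f)$ is nothing but the conjugation $\iota^\theta \circ f \circ (\iota^\theta)^{-1}$ between these two open sets, hence a homeomorphism; thus $\sigma(f) \in \Gamma(X_G)$. A short computation using the injectivity of $\iota^\theta$ (to match domains after restriction) yields $\sigma(f \circ g) = \sigma(f) \circ \sigma(g)$, showing that $\sigma$ is a semigroup homomorphism. Then the conditions of Definition~\ref{cemb} follow directly: if $(f,x) \in \Gamma(X)*X$, then $c(x) = \iota^\theta(x) \in \iota^\theta(\dom(f)) = \dom(\sigma(f))$, giving (i); and by definition $\sigma(f)(\iota^\theta(x)) = \iota^\theta(f(x))$, which is exactly the commutativity of the diagram in (ii).

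The only potential obstacle is the openness of $\iota^\theta$ under the niceness hypothesis: without it, $\iota^\theta(\dom(f))$ need not be open in $X_G$, so the formula defining $\sigma(f)$ would fail to land in $\Gamma(X_G)$ and the construction would collapse. Since this openness is precisely what \cite[Theorem 3.13(i)]{KL} supplies for nice partial actions, the verification beyond that citation is entirely routine and parallels the open-embedding example discussed just before the proposition.
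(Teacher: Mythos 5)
Your proposal is correct and follows exactly the route the paper intends: the paper gives no explicit proof, deriving the proposition as an immediate consequence of the preceding remark that any \emph{open} embedding induces a $\Gamma$-embedding, combined with \cite[Theorem 3.13(i)]{KL} to guarantee that $\iota^\theta$ is an open embedding when $\theta$ is nice. Your write-up simply fills in the routine verifications that the paper leaves implicit.
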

\medskip
	
Consider the hyperspace $\mathcal{K}(X)$ of all nonempty compact subsets of $X$, endowed with the Vietoris topology. This topology has as a subbasis the following sets
	\begin{center}
		$V^-=\{A\in \mathcal{K}(X)\mid A\cap V\neq \emptyset\}$\,\,\, and \,\,\, $V^+=\{A\in \mathcal{K}(X)\mid A\subset V\}$,
	\end{center}
for each nonempty open subset $V$ of $X$. 

It is well known that the map 
\begin{equation}\label{ckx}c: X\ni x \mapsto  \{x\}\in  \mathcal{K}(X)
\end{equation}
is an embedding.

For each $f\in \Gamma(X)$, define $\sigma(f)\in \Gamma(\mathcal{K}(X))$ as follows. If $\text{dom}(f)\neq\emptyset$, set  $$\sigma(f):\text{dom}(f)^+\ni A \mapsto f(A)\in  \text{im}(f)^+.$$ If $f=1_\emptyset$,  define $\sigma(f)=1_\emptyset$.

\begin{proposition}\label{homosemi1}
	\rm{Any topological space $X$ is $\Gamma$-embedded in $\mathcal{K}(X)$.}
\end{proposition}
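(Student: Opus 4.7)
The plan is to verify, in order, the three pieces that Definition~\ref{cemb} requires: that $\sigma$ takes values in $\Gamma(\mathcal{K}(X))$, that $\sigma$ is a semigroup homomorphism, and that the commutativity condition holds with the singleton embedding $c$.

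First I would check that for each $f\in \Gamma(X)$ with $\dom(f)\neq \emptyset$, the map $\sigma(f)\colon \dom(f)^+\to \mathrm{im}(f)^+$ is a well-defined homeomorphism between open subsets of $\mathcal{K}(X)$. Openness of $\dom(f)^+$ and $\mathrm{im}(f)^+$ is immediate from the definition of the Vietoris subbasis. Since $f\colon \dom(f)\to\mathrm{im}(f)$ is a homeomorphism and takes compact sets to compact sets, the assignment $A\mapsto f(A)$ is a bijection between $\dom(f)^+$ and $\mathrm{im}(f)^+$ whose inverse is $B\mapsto f^{-1}(B)$. Continuity in both directions is then the standard functoriality of $\mathcal{K}(-)$: for any open $V\subset X$ one has $\sigma(f)^{-1}(V^+\cap \mathrm{im}(f)^+)=f^{-1}(V)^+\cap \dom(f)^+$ and $\sigma(f)^{-1}(V^-\cap \mathrm{im}(f)^+)=f^{-1}(V)^-\cap \dom(f)^+$, so preimages of subbasic opens are open.

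Next I would show that $\sigma$ preserves composition. Given $f,g\in \Gamma(X)$ with both domains nonempty, the definition of the product in $\Gamma(X)$ says that $f\circ g$ has domain $g^{-1}(\dom(f)\cap \mathrm{im}(g))$. On the hyperspace side, the domain of $\sigma(f)\circ \sigma(g)$ is
\[
\{A\in\dom(g)^+\mid g(A)\in \dom(f)^+\cap \mathrm{im}(g)^+\}
=\{A\subset \dom(g)\mid g(A)\subset \dom(f)\cap \mathrm{im}(g)\},
\]
which is precisely $\bigl(g^{-1}(\dom(f)\cap \mathrm{im}(g))\bigr)^+=\dom(\sigma(f\circ g))$. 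On this common domain the two maps agree because $\sigma(f)\circ\sigma(g)(A)=f(g(A))=(f\circ g)(A)=\sigma(f\circ g)(A)$. The edge cases where $f=1_\emptyset$, $g=1_\emptyset$, or the composition becomes $1_\emptyset$ in $\Gamma(X)$ are handled directly by the convention $\sigma(1_\emptyset)=1_\emptyset$.

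Finally I would verify Definition~\ref{cemb}(i)--(ii). Given $(f,x)\in \Gamma(X)*X$, the singleton $\{x\}=c(x)$ is contained in $\dom(f)$, hence $\{x\}\in \dom(f)^+=\dom(\sigma(f))$, so $(\sigma(f),c(x))\in \Gamma(\mathcal{K}(X))*\mathcal{K}(X)$. Moreover,
\[
\mathrm{ev}^{\mathcal{K}(X)}\!\bigl(\sigma(f),c(x)\bigr)=\sigma(f)(\{x\})=f(\{x\})=\{f(x)\}=c(f(x))=c\bigl(\mathrm{ev}^X(f,x)\bigr),
\]
which is the commutativity of the diagram. Together with the fact that $c$ in~\eqref{ckx} is an embedding, this establishes that $X$ is $\Gamma$-embedded in $\mathcal{K}(X)$. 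The only step requiring real care is the composition computation, where one must be attentive to the restricted domain conventions of $\Gamma(X)$; once the domain identification above is carried out, the rest is formal.
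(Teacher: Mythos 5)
Your proposal is correct and follows essentially the same route as the paper: verify that $\sigma$ is a semigroup homomorphism by identifying $\dom(\sigma(f)\circ\sigma(g))$ with $\dom(\sigma(f\circ g))$, then check conditions (i) and (ii) of Definition~\ref{cemb} using the singleton embedding. Your explicit subbasis computation showing $\sigma(f)\in\Gamma(\mathcal{K}(X))$ is a welcome addition that the paper leaves implicit, but it does not change the overall argument.
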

	
\begin{proof}
	In order to see that $\sigma:\Gamma(X)\rightarrow \Gamma(\mathcal{K}(X))$ is a semigroup homomorphism, take $f,g\in \Gamma(X)$, and let us show that $\sigma(f\circ g)=\sigma(f)\circ \sigma(g)$. 
	First, assume that $\mathrm{dom}(f\circ g)=\emptyset$. If $ \mathrm{dom}(\sigma(f)\circ \sigma(g))\neq\emptyset$, then there exists $A\in \mathcal{K}(X)$ such that $A\subset \dom(g)$ and $g(A)\subset \mathrm{dom}(f)$, which implies $A\subset \mathrm{dom}(f\circ g)$, a contradiction. Hence, $\mathrm{dom}(\sigma(f)\circ\sigma(g))=\emptyset$ and $\sigma(f\circ g)=\sigma(f)\circ\sigma(g)$. 
	
	Now suppose that $\mathrm{dom}(f\circ g)\neq\emptyset$. Then
		\begin{align*}
			\mathrm{dom}(\sigma(f\circ g))&=\{A\in \mathcal{K}(X)\mid A\subset \mathrm{dom}(f\circ g)\}\\
			&=\{A\in \mathcal{K}(X)\mid A\subset \mathrm{dom}(g)\ \wedge\ g(A)\subset \mathrm{dom}(f) \}\\
			&=\{A\in \mathcal{K}(X)\mid A\in \mathrm{dom}(\sigma(g))\ \wedge\ \sigma(g)(A)\in \mathrm{dom}(\sigma(f))\}\\
			&=\mathrm{dom}(\sigma(f)\circ\sigma(g)).
		\end{align*}
	Moreover, it is straightforward to verify that $\sigma(f\circ g)(A)=(\sigma(f)\circ \sigma(g))(A),$ for each $A\in\mathrm{dom}(\sigma(f\circ g))$. Thus, $\sigma$ is a semigroup homomorphism. 
	
	Finally, observe that for each $(f,x)\in\Gamma(X)*X$, we have $x\in\mathrm{dom}(f)$, and hence  $c(x)=\{x\}\in\mathrm{dom}(\sigma(f))$. Therefore, the map $$\sigma\times c: \Gamma(X)*X\rightarrow \Gamma(\mathcal{K}(X))\times \mathcal{K}(X)$$ satisfies $\mathrm{im}(\sigma\times c)\subset \Gamma(\mathcal{K}(X))*\mathcal{K}(X)$. Furthermore, it is clear that the diagram
		
	\begin{center}
			$\xymatrix{\Gamma(X)*X\ar[d]_-{\mathrm{ev}^X}\ar[r]^-{\sigma\times c}& \Gamma(\mathcal{K}(X))*\mathcal{K}(X)\ar[d]^-{\text{ev}^{\mathcal{K}(X)}}\\
				X\ar[r]_-{c}& \mathcal{K}(X)
			}$
	\end{center}
commutes. Hence,  the space $X$ is $\Gamma$-embedded in $\mathcal{K}(X)$.
	\end{proof}
	
It is not difficult to see that the semigroup homomorphism  $\sigma:\Gamma(X)\rightarrow \Gamma(\mathcal{K}(X))$ satisfies $\sigma(\mathrm{id}_X)=\mathrm{id}_{\mathcal{K}(X)}$. Hence,	according to Proposition \ref{motiva1},  we have that given a topological space $X$, every topological partial action $\theta:G\rightarrow \Gamma(X)$ induces a topological partial action $\hat{\theta}:G\rightarrow \Gamma(\mathcal{K}(X))$ defined by $\hat{\theta}=\sigma\circ\theta$. Moreover
	\begin{align*}
		G*\mathcal{K}(X)=&\{(g,A)\in G\times \mathcal{K}(X)\mid A\in\mathrm{dom}(\hat{\theta}(g))\}\\
		=&\{(g,A)\in G\times \mathcal{K}(X)\mid A\in\mathrm{dom}(\theta(g))^+\}\\
		=&\{(g,A)\in G\times \mathcal{K}(X)\mid A\subset\mathrm{dom}(\theta(g))\},
	\end{align*}
	and $\hat{\theta}(g)(A)=\sigma(\theta(g))(A)=\theta(g)(A)$ for all $(g,A)\in G*\mathcal{K}(X)$. This partial action was studied by the first and second named authors in \cite{MPR}.\medskip

	Recall that  the {\it cone of X} is the topological space  $$\mathrm{Cone}(X):=([0,1]\times X)/(\{0\}\times X),$$ endowed with the quotient topology. We denote by $\eta: [0,1]\times X\rightarrow \mathrm{Cone}(X)$ the quotient map, and by $tx$ the equivalence class of $(t,x)\in [0,1]\times X$, that is, $\eta(t,x)=tx$.

	\begin{proposition}\label{cone}
		\rm{Let $X$ be a topological space. Then $X$ is $\Gamma$-embedded in $\mathrm{Cone}(X)$.}
	\end{proposition}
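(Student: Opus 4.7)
The plan is to embed $X$ as the ``top'' slice of the cone via $c\colon X\ni x\mapsto 1x = \eta(1,x)\in \text{Cone}(X)$, and to extend each $f\in\Gamma(X)$ fiberwise in the parameter $t$, away from the cone point. Concretely, for $f\in\Gamma(X)$ with $\dom(f)\neq\emptyset$, I would set
$$\dom(\sigma(f)) := \eta((0,1]\times \dom(f)), \qquad \sigma(f)(tx) := t\,f(x),$$
and put $\sigma(1_\emptyset)=1_\emptyset$. Since $\eta$ is injective on $(0,1]\times X$, this formula is well-defined, and $\sigma(f)^{-1}$ is visibly $\sigma(f^{-1})$.

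The first round of verifications hinges on a single observation: for any open $U\subseteq X$, the set $(0,1]\times U$ is $\eta$-saturated, so
$$\eta^{-1}(\eta((0,1]\times U)) = (0,1]\times U,$$
which shows that $\eta((0,1]\times U)$ is open in $\text{Cone}(X)$. From this, both the openness of $\dom(\sigma(f))$ and the fact that $c$ is an embedding follow quickly: $c$ is continuous and injective, and the equality $c(U) = \eta((0,1]\times U)\cap c(X)$ exhibits $c$ as a homeomorphism onto its image. To see that $\sigma(f)$ is a partial homeomorphism, I would use that $\eta$ restricted to the open saturated set $(0,1]\times\dom(f)$ is a quotient map onto $\dom(\sigma(f))$; combined with the evident continuity of the lift $(t,x)\mapsto \eta(t, f(x))$, this yields continuity of $\sigma(f)$, and the same argument applied to $f^{-1}$ handles the inverse.

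The semigroup homomorphism property $\sigma(f\circ g)=\sigma(f)\circ\sigma(g)$ is then a short case analysis in the spirit of Proposition~\ref{homosemi1}: when $\dom(f\circ g)=\emptyset$, both sides are $1_\emptyset$; otherwise, the domains of both sides coincide with $\eta((0,1]\times\dom(f\circ g))$ and the values agree by direct computation on representatives $tx$. Finally, for each $(f,x)\in\Gamma(X)*X$ one has $c(x)=1x\in\eta((0,1]\times\dom(f))=\dom(\sigma(f))$ and $\sigma(f)(c(x)) = 1\,f(x) = c(f(x))$, which yields both conditions of Definition~\ref{cemb}. The main technical point I expect to encounter is the continuity of $\sigma(f)$, whose justification rests on the standard but crucial fact that open saturated subsets of a quotient inherit the quotient structure; this is what legitimizes passing from the continuous lift on $(0,1]\times \dom(f)$ to $\sigma(f)$ itself on $\dom(\sigma(f))$.
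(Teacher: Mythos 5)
Your proposal is correct and follows essentially the same route as the paper: the same embedding $c(x)=1x$, the same fiberwise extension $\sigma(f)(tx)=tf(x)$ on $\eta((0,1]\times\dom(f))$, and the same verifications of the homomorphism property and the commuting diagram. The only difference is that you spell out the topological details (saturation of $(0,1]\times U$, continuity of $\sigma(f)$ via the quotient structure on open saturated sets) that the paper leaves implicit.
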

	
	\begin{proof}
	It is not difficult to see that the map $c:X\rightarrow \mathrm{Cone}(X)$ defined by $c(x)=1x$ for all $x\in X,$ is an embedding.  
	To construct a semigroup homomorphism $\sigma: \Gamma(X)\rightarrow \Gamma(\mathrm{Cone}(X)),$ let  $f\in\Gamma(X)$. Observe that the sets  $\eta((0,1]\times \mathrm{dom}(f))$ and $\eta((0,1]\times \mathrm{im}(f))$ are open subsets of $\mathrm{Cone}(X)$. Define  $$\sigma(f): \eta((0,1]\times \mathrm{dom}(f))\ni tx\mapsto tf(x)\in \eta((0,1]\times \mathrm{im}(f)).$$ The map $\sigma(f)$ is continuous and is a homeomorphism with inverse $\sigma(f^{-1})$. Hence, we obtain a well-defined function $\sigma \colon \Gamma(X)\to \Gamma(\mathrm{Cone}(X))$. Moreover, for each   $f, g\in \Gamma(X)$, we have
		\begin{align*}
			\mathrm{dom}(\sigma(f\circ g))=\ & \{tx\in \mathrm{Cone}(X)\mid t>0\ \wedge\ x\in \mathrm{dom}(f\circ g)\}\\
			=\ & \{tx\in \mathrm{Cone}(X)\mid t>0\ \wedge\ x\in \mathrm{dom}( g)\ \wedge\ g(x)\in \mathrm{dom}(f)\}\\
			=\ & \{tx\in \mathrm{Cone}(X)\mid tx\in \mathrm{dom}(\sigma(g))\ \wedge\ g(x)\in \mathrm{dom}(f)\}\\
			=\ & \{tx\in \mathrm{Cone}(X)\mid tx\in \mathrm{dom}(\sigma(g))\ \wedge\ \sigma(g)(tx)\in \mathrm{dom}(\sigma(f))\}\\
			=\ & \mathrm{dom}(\sigma(f)\circ \sigma(g)).
		\end{align*}
		
	Furthermore, for every $tx\in \mathrm{dom}(\sigma(f)\circ\sigma(g))$,
		\begin{center}
			$(\sigma(f)\circ \sigma(g))(tx)=\sigma(f)(tg(x))=tf(g(x))=\sigma(f\circ g)(tx).$
		\end{center}
		
Thus, $\sigma(f)\circ \sigma(g)=\sigma(f\circ g)$, and therefore $\sigma$ is a semigroup homomorphism.
		
	Finally, it is clear that $(\sigma\times c)(\Gamma(X)*X)\subset \Gamma(\mathrm{Cone}(X))*\mathrm{Cone}(X)$, and the following diagram is commutative
			\begin{center}
			$\xymatrix{\Gamma(X)*X\ar[d]_-{\mathrm{ev}^X}\ar[r]^-{\sigma\times c}& \Gamma(\text{Cone}(X))*\mathrm{Cone}(X)\ar[d]^-{\text{ev}^{\text{Cone}(X)}}\\
				X\ar[r]_-{c}& \text{Cone}(X).
			}$
		\end{center}
		Hence, the space $X$ is $\Gamma$-embedded in $\mathrm{Cone}(X)$.
	\end{proof}
	
	We finish this section with the next proposition.
	
	\begin{proposition}\label{prod}
		\rm{Let $\{X_i \}_{1\leq i\leq n}$ be a family of topological spaces, and let $X=\prod_{i=1}^n X_i$ be endowed with the product topology. Then $X_i$ is $\Gamma$-embedded in $X$ for each $i\in\{1,\dots,n\}$.}
	\end{proposition}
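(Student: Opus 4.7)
The plan is to define the embedding $c_i\colon X_i\to X$ by inserting $X_i$ into the $i$-th slot of the product, with fixed base points in the remaining coordinates, and to define $\sigma_i\colon\Gamma(X_i)\to\Gamma(X)$ by extending each partial homeomorphism by the identity on the other factors. Concretely, assuming all $X_j$ are nonempty (otherwise $X=\emptyset$ and the claim is vacuous), fix $x_j^0\in X_j$ for $j\neq i$ and set
\[
c_i(x)=(x_1^0,\dots,x_{i-1}^0,x,x_{i+1}^0,\dots,x_n^0).
\]
For $f\in\Gamma(X_i)$ with $\dom(f)\neq\emptyset$, put $U_f=X_1\times\cdots\times\dom(f)\times\cdots\times X_n$ and $V_f=X_1\times\cdots\times\mathrm{im}(f)\times\cdots\times X_n$, both open in $X$, and define
\[
\sigma_i(f)\colon U_f\ni(x_1,\dots,x_i,\dots,x_n)\mapsto (x_1,\dots,f(x_i),\dots,x_n)\in V_f;
\]
set $\sigma_i(1_\emptyset)=1_\emptyset$. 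It is clear that $\sigma_i(f)$ is a homeomorphism with inverse $\sigma_i(f^{-1})$, so $\sigma_i$ is a well-defined function into $\Gamma(X)$, and that $c_i$ is an embedding onto its image (it is a topological section of the $i$-th projection restricted to $\{x_1^0\}\times\cdots\times X_i\times\cdots\times\{x_n^0\}$).

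The main verification is that $\sigma_i$ is a semigroup homomorphism. For $f,g\in\Gamma(X_i)$, the composition convention in $\Gamma(X)$ gives
\[
\dom\bigl(\sigma_i(f)\circ\sigma_i(g)\bigr)=\sigma_i(g)^{-1}\bigl(\dom(\sigma_i(f))\cap\mathrm{im}(\sigma_i(g))\bigr).
\]
Since $\dom(\sigma_i(f))\cap\mathrm{im}(\sigma_i(g))$ equals $X_1\times\cdots\times(\dom(f)\cap\mathrm{im}(g))\times\cdots\times X_n$, pulling back through $\sigma_i(g)$ (which acts only in the $i$-th coordinate) yields $X_1\times\cdots\times g^{-1}(\dom(f)\cap\mathrm{im}(g))\times\cdots\times X_n=U_{f\circ g}=\dom(\sigma_i(f\circ g))$, and on this set both maps send $(x_1,\dots,x_i,\dots,x_n)$ to $(x_1,\dots,f(g(x_i)),\dots,x_n)$. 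The empty-domain cases are handled exactly as in the proof of Proposition~\ref{homosemi1}.

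Finally, for $(f,x)\in\Gamma(X_i)*X_i$ we have $x\in\dom(f)$, so $c_i(x)$ has its $i$-th coordinate in $\dom(f)$ and other coordinates in the respective $X_j$, hence $c_i(x)\in\dom(\sigma_i(f))=U_f$. This gives condition (i) of Definition~\ref{cemb}. A direct computation shows
\[
\sigma_i(f)(c_i(x))=(x_1^0,\dots,f(x),\dots,x_n^0)=c_i(f(x)),
\]
which is the commutativity of the diagram in condition (ii). I do not expect any serious obstacle here: the only subtlety is keeping the composition convention in $\Gamma$ straight while verifying the homomorphism property, and treating the vacuous case $\dom(f)=\emptyset$ separately as was done earlier.
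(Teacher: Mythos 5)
Your proposal is correct and follows essentially the same route as the paper: the same base-point embedding $c_i$ into the $i$-th slot, the same $\sigma_i(f)$ acting as $f$ on the $i$-th coordinate and the identity elsewhere with domain $\pi_i^{-1}(\dom(f))$, and the same domain computation to verify the homomorphism property. The only cosmetic difference is that you make explicit the nonemptiness convention and the $1_\emptyset$ case, which the paper leaves implicit.
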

	
	\begin{proof}
	Fix $i\in\{1,\dots,n\}$. For each $j\in\{1,\dots,n\}\setminus\{i\}$, choose an element $z_j\in X_j$. Define the map $$c: X_i\ni x\mapsto (z_1,\dots,z_{j-1},x,z_{j+1},\dots,z_n)  \in   X.$$  
	Clearly, $c$ is an embedding.  
	
	Now, given  $f\in \Gamma(X_i)$, define
	\begin{center}
		$\sigma(f):\pi_i^{-1}(\mathrm{dom}(f))\rightarrow \pi_i^{-1}(\mathrm{im}(f)),\quad$ $\sigma(f)(x_1,\dots,x_i,\dots,x_n)=(x_1,\dots,f(x_i),\dots,x_n)$.
	\end{center} 
	
	Then $\sigma(f)\in \Gamma(X)$, and hence we have  a function $\sigma: \Gamma(X_i)\rightarrow \Gamma(X)$. We claim that $\sigma$ is a semigroup homomorphism. Indeed, for each $f,g\in \Gamma(X_i)$, we have
		\begin{align*}
		\mathrm{dom}(\sigma(f)\circ\sigma(g))=\ & \sigma(g)^{-1}(\mathrm{im}(\sigma(g))\cap\mathrm{dom}(\sigma(f)))\\
		=\ & \sigma(g\m)(\pi_i^{-1}(\mathrm{im}(g))\cap\pi_i^{-1}(\mathrm{dom}(f)))\\
		=\ & \sigma(g\m)(\pi_i^{-1}(\mathrm{im}(g)\cap\mathrm{dom}(f)))\\
		=\ & \pi_i^{-1}(g^{-1}(\mathrm{im}(g)\cap\mathrm{dom}(f)))\\
		=\ & \pi_i^{-1}(\mathrm{dom}(f\circ g))\\
		=\ & \mathrm{dom}(\sigma(f\circ g)),
	\end{align*}
	and and for every $z\in \dom(\sigma(f\circ g))$,
	
	$$\sigma(f\circ g)(z)=\sigma(f)(\sigma(g)(z)).$$

	Hence,  $\sigma(f\circ g)=\sigma(f)\circ\sigma(g)$, as desired. Moreover, observe that $\sigma(\mathrm{id}_{X_i})=\mathrm{id}_{X}$.
	
	Finally, it is easy to see that $(\sigma\times c)(\Gamma(X_i)*X_i)\subset \Gamma(X)*X$ and that the following diagram is commutative
	\begin{equation*}
	\xymatrix{\Gamma(X_i)*X_i\ar[d]_-{\mathrm{ev}^{X_i}}\ar[r]^-{\sigma\times c}& \Gamma(X)*X\ar[d]^-{\mathrm{ev}^{X}}\\
			X_i\ar[r]_-{c}& X.
		}
	\end{equation*}
	This concludes the proof.
	\end{proof}
				
\section{Partial actions on function spaces}\label{pafun}	

\subsection{Construction of the partial action}\label{Subsec3.1}

Let $X$ and $Y$ be topological spaces, and let $C(X,Y)$ denote the set of continuous functions from $X$ to $Y,$ endowed with the compact-open topology $\tau_{\rm co}$. Recall that  $\tau_{\rm co}$ is the topology generated by the subbasis consisting of sets of the form $$\langle K, V \rangle= \{f \in C(X, Y) \mid f (K) \subset V\},$$  where $K\subset X$ is compact and $V\subset Y$ is open. 

We show that, whenever $X$ is compact, the space $Y$ is $\Gamma$-embedded into $C(X,Y)$. To this end, consider the embedding
	\begin{equation}\label{jm}
		c:Y\ni y\mapsto c_y\in C(X,Y),
	\end{equation}
	where $c_y$ is the constant map on $X$ with value $y$. Moreover, define the mapping  
\begin{equation}\label{eq-homoC(X,Y)}
\sigma: \Gamma(Y)\ni f\mapsto \sigma(f)\in\Gamma(C(X,Y))
\end{equation}	by
$$\sigma(f):\langle X,\mathrm{dom}(f)\rangle\ni r\mapsto f\circ r\in \langle X,\mathrm{im}(f)\rangle,$$
for $f\neq 1_{\emptyset}$, and $\sigma(f)=1_\emptyset$  otherwise.

With this notation, we obtain the following result.

\begin{theorem}\label{YencajaenC(X,Y)}
	\rm{Let $X$ be a compact space and $Y$ be a topological space. Then $(\sigma,c)$ is a $\Gamma$-embedding of $Y$ into $C(X,Y)$.}
\end{theorem}

\begin{proof}
First, we verify that $\sigma(f)\in \Gamma(C(X,Y))$. Indeed, for each compact subset $K$ of $X$ and each open subset $V$ of $Y$, we have
	\begin{align*}
		\sigma(f)^{-1}(\langle X, \mathrm{im}(f) \rangle\cap \langle K,V\rangle) & = \{r\in \langle X,\mathrm{dom}(f)\rangle\mid f(r(K))\subset V \}\\
		& =\{r\in \langle X,\mathrm{dom}(f)\rangle\mid r(K)\subset f^{-1}(V\cap\mathrm{im}(f))\}\\
		& = \langle X,\mathrm{dom}(f)\rangle\cap\langle K,f^{-1}(V\cap\mathrm{im}(f))\rangle
	\end{align*}
which shows that $\sigma(f)$ is continuous. Since $\sigma(f)^{-1}=\sigma(f^{-1})$, it follows that $\sigma(f)\in \Gamma(C(X,Y)).$

Next, we show that $\sigma$ is a semigroup homomorphism. Let $f,g\in \Gamma(Y)$. Then
	\begin{align*}
		\mathrm{dom}(\sigma(f\circ g)) & =\{r\in C(X,Y)\mid r(X)\subset \mathrm{dom}(f\circ g)\}\\
		& =\{r\in C(X,Y)\mid r(X)\subset \mathrm{dom}(g)\ \wedge\ g(r(X))\subset \mathrm{dom}(f) \}\\
		& = \{r\in C(X,Y)\mid r\in \mathrm{dom}(\sigma(g))\ \wedge\ \sigma(g)(r)\in \mathrm{dom}(\sigma(f))\}\\
		& = \mathrm{dom}(\sigma(f)\circ\sigma(g)).
	\end{align*}
	
	Moreover, for each $r\in \mathrm{dom}(\sigma(f\circ g))$,
	\begin{center}
		$\sigma(f\circ g)(r)=(f\circ g)\circ r=f\circ (g\circ r)=f\circ \sigma(g)(r)=\sigma(f)(\sigma(g)(r))$.
	\end{center}
	
	Thus, $\sigma(f\circ g)=\sigma(f)\circ\sigma(g)$, proving that $\sigma$ is a semigroup homomorphism.
	
On the other hand,	for each $(f,y)\in \Gamma(Y)*Y$, we have
		$$c(y)(X)=c_y(X)=\{y\}\subset \mathrm{dom}(f)$$
	so $c_y\in \mathrm{dom}(\sigma(f))$ and hence $(\sigma(f),c(y))\in \Gamma(C(X,Y))*C(X,Y)$. This shows that $\mathrm{im}(\sigma\times c)\subset \Gamma(C(X,Y))*C(X,Y)$.
	
	Finally, we verify that the diagram

		$$\xymatrix{\Gamma(Y)*Y\ar[d]_-{\mathrm{ev}^Y}\ar[r]^-{\sigma\times c}& \Gamma(C(X,Y))*C(X,Y)\ar[d]^-{\mathrm{ev}^{C(X,Y)}}\\
			Y\ar[r]_-{c}& C(X,Y).
		}$$ commutes. Indeed, for $(y,f)\in \Gamma(Y)*Y$, we have
	\begin{center}
		$(\mathrm{ev}^{C(X,Y)}\circ (\sigma\times c))(f,y)=\mathrm{ev}^{C(X,Y)}(\sigma(f),c_y)=f\circ c_y=c_{f(y)}=(c\circ \mathrm{ev}^Y)(f,y)$.
	\end{center}
	Therefore, the pair $(\sigma,c)$ is a $\Gamma$-embedding of $Y$ into $C(X,Y)$.
\end{proof}

\medskip
					
\textbf{From now on}, the letter $G$ will denote a topological group.

Let $Y$ be a topological space equipped with a partial action $\theta:G\rightarrow \Gamma(Y)$, and let $X$ be a compact space. By Theorem \ref{YencajaenC(X,Y)}, the space $Y$ is $ \Gamma$-embedded in $C(X,Y)$, where $c$ and $\sigma$ are as in~\eqref{jm} and~\eqref{eq-homoC(X,Y)}, respectively.  Since $\sigma(\mathrm{id}_Y)=\mathrm{id}_{C(X,Y)},$ we have 
 by Proposition \ref{motiva1}  that the map
\begin{equation}\label{eq paC(x,y)}
\hat{\theta}=\sigma\circ \theta:G\rightarrow \Gamma(C(X,Y))
\end{equation}
defines a  partial action of $G$ on $C(X,Y)$ such that $c$ is a $G$-map.

For each $g\in G$, the domain of $\hat{\theta}(g)$ is given by
	\begin{equation}\label{domindu}
		\mathrm{dom}(\hat{\theta}(g))=C(X,Y)_{g^{-1}}=\langle X, Y_{g^{-1}} \rangle  =\{f\in C(X,Y)\mid f(X)\subset Y_{g^{-1}}\}.
	\end{equation}

Consequently, the domain of $\hat{\theta}$ is
\begin{equation}\label{dom}
		G*C(X,Y):=\{(g,f)\in G\times C(X,Y)\mid f\in C(X,Y)_{g\m}\}.
	\end{equation}
	
More explicitly, the topological partial action $\hat\theta$ is 	given by
\begin{equation}\label{hat}\hat\theta: G*C(X,Y) \ni (g,f)\mapsto\theta_g\circ f\in C(X,Y).\end{equation}

In the next two results we  explore some important relations between $\theta$ and $ \hat\theta$.
				
\begin{proposition}\label{continuidad}
	\rm{Let $Y$ be a topological space, $\theta: G*Y\rightarrow Y$ be a partial action, and $X$ be a compact space. Then  $\theta$ is continuous if and only if $\hat \theta$ is continuous.}
\end{proposition}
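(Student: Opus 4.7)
The plan is to prove the two implications separately, with the reverse direction being essentially formal and the forward direction carrying the real work.

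For the direction ``$\hat{\theta}$ continuous $\Rightarrow$ $\theta$ continuous'', I would exploit that the embedding $c\colon Y\to C(X,Y)$, $y\mapsto c_y$, is a $G$-map by Proposition~\ref{motiva1}. The commutativity of the diagram in Definition~\ref{cemb} gives $c(\theta(g,y))=\hat{\theta}(g,c_y)$ whenever $(g,y)\in G*Y$. Moreover $(g,y)\in G*Y$ if and only if $(g,c_y)\in G*C(X,Y)$, since $c_y(X)=\{y\}$. Hence on $G*Y$ one has the factorization $\theta=c^{-1}\circ\hat{\theta}\circ(\mathrm{id}_G\times c)$, where $c^{-1}$ is the inverse of the homeomorphism $c\colon Y\to c(Y)$. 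Continuity of $\theta$ then follows from continuity of the three factors.

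For the direction ``$\theta$ continuous $\Rightarrow$ $\hat{\theta}$ continuous'', I would verify that $\hat{\theta}^{-1}(\langle K,V\rangle)$ is open in $G*C(X,Y)$ for every subbasic open set $\langle K,V\rangle$ with $K\subset X$ compact and $V\subset Y$ open. Fix $(g_0,f_0)$ in this preimage. By~\eqref{domindu}, $f_0(X)\subset Y_{g_0^{-1}}$, and the condition $\theta_{g_0}\circ f_0\in\langle K,V\rangle$ means $\theta(g_0,y)\in V$ for every $y$ in the \emph{compact} set $f_0(K)\subset Y_{g_0^{-1}}$. Using continuity of $\theta$ at each such $(g_0,y)$, I obtain open neighborhoods $U_y\ni g_0$ in $G$ and $W_y\ni y$ in $Y$ with $\theta((U_y\times W_y)\cap(G*Y))\subset V$.

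Compactness of $f_0(K)$ (rather than of $K$ itself) lets me extract a finite subcover $W_{y_1},\ldots,W_{y_n}$ and form $U:=\bigcap_i U_{y_i}$ and $W:=\bigcup_i W_{y_i}$; I claim that $(U\times\langle K,W\rangle)\cap(G*C(X,Y))$ is the desired open neighborhood of $(g_0,f_0)$. Indeed, for $(g,f)$ in this set, $f(X)\subset Y_{g^{-1}}$ and $f(K)\subset W$, so for every $x\in K$ the point $f(x)$ lies in some $W_{y_i}$ and in $Y_{g^{-1}}$, hence $(g,f(x))\in(U_{y_i}\times W_{y_i})\cap(G*Y)$ and therefore $\theta_g(f(x))\in V$; i.e., $\hat{\theta}(g,f)\in\langle K,V\rangle$.

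The main obstacle is the forward direction, and specifically the choice of what to cover: one must work with the compact set $f_0(K)\subset Y$ (the image, not the preimage) in order for the neighborhoods furnished by continuity of $\theta$ to assemble into a single open $W\subset Y$ defining the compact-open neighborhood $\langle K,W\rangle$ of $f_0$. The bookkeeping between the domain conditions in $G*Y$ and in $G*C(X,Y)$ (via~\eqref{domindu}) is routine but must be tracked to ensure that pairs used in the continuity hypothesis genuinely lie in $G*Y$.
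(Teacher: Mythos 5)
Your proposal is correct and follows essentially the same route as the paper: the forward direction covers the compact set $f_0(K)$ using continuity of $\theta$ and assembles the basic neighborhood $U\times\langle K,W\rangle$ intersected with $G*C(X,Y)$, exactly as in the paper's proof, and the reverse direction factors $\theta$ through $\hat{\theta}$ via the constant-map embedding (the paper uses the globally defined evaluation $\mathrm{ev}_x$ in place of your $c^{-1}$, but $\mathrm{ev}_x|_{c(Y)}=c^{-1}$, so this is the same argument).
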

				
\begin{proof}

Suppose that  $\theta$ is continuous. In order to prove that $\hat{\theta}$ is continuous, it suffices to show that $\hat \theta\m(\langle K, V \rangle )$ is open for every compact subset $K\subset X$  and every open subset $V\subset Y$.

Let $(g,f)\in \hat \theta\m(\langle K, V \rangle )$. Since $\{g\}\times f(K)\subset \theta^{-1}(V)$ and $\theta^{-1}(V)$ is open in $G*Y$,  it follows that for each $k\in K$ there exist neighborhoods $S_k$ of $g$ in $G$ and $T_k$ of $f(k)$ in $Y$ such that $(S_k\times T_k)\cap (G*Y)\subset \theta^{-1}(V)$.

Since  $f$ is continuous and $K$ is compact, the set $f(K)$ is compact. Therefore, there exist points  $k_1,\dots,k_n\in K$ such that $\{T_{k_i}\}_ {1\leq i\leq n}$ is an open cover of $f(K)$. Define

\begin{center}
	$W=G*C(X,Y)\cap \left(\bigcap\limits_{i=1}^nS_{k_i}\times \left\langle K, \bigcup\limits_{i=1}^{n}T_{k_i}\right\rangle\right)$.
\end{center}

Observe that $W$ is an open neighborhood of $(g,f)$ in $G*C(X,Y)$. We claim that $W\subset \hat \theta\m(\langle K, V \rangle )$. Indeed, let $(h,p)\in W$ and  $k\in K$. Then there exists $i\in\{1,\dots,n\}$ such that $p(k)\in T_{k_i}$. Consequently,

\begin{center}
	$(h,p(k))\in (S_{k_i}\times T_{k_i})\cap (G*Y)\subset \theta^{-1}(V)$,
\end{center} 
and hence $\hat \theta(h,p)(k)\in V$. Since $k\in K$ was arbitrary, we conclude that $\hat \theta(h,p)\in \langle K, V \rangle $. Therefore, $\hat \theta\m(\langle K, V \rangle )$ is open, which proves that $\hat{\theta}$ is continuous.

Conversely, suppose that $\hat \theta$ is continuous, and fix $x\in X.$
Since  the evaluation map $\mathrm{ev}_x:C(X,Y)\ni x\mapsto f(x) \in Y$  and the map $\alpha: G*Y\ni (g,y)\mapsto (g, c_y)\in G*C(X,Y)$ are  continuous,  and since $\theta=\mathrm{ev}_x\circ \hat{\theta}\circ \alpha,$ we conclude that  $\theta$ is continuous, as desired. 
\end{proof}

\begin{proposition}\label{nice}
	\rm{Let $\theta:G*Y\rightarrow Y$ be a partial action on a topological space $Y$, and let $X$ be a compact space. Then:
	\begin{enumerate}
		\item [(i)] $G*Y$ is open in $G\times Y$ if and only if $G*C(X,Y)$ is open in $G\times C(X,Y)$.\vspace{0.2cm}
		\item [(ii)] $\theta$ is nice if and only if $\hat{\theta}$ is nice.
	\end{enumerate}}
\end{proposition}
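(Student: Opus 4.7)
The plan is to observe that part (ii) is immediate from part (i) together with Proposition~\ref{continuidad}, since being nice means continuous and having open domain. So the substance is part (i), which I will prove by showing both implications directly, using constant maps in one direction and the compactness of $X$ together with a standard compact-open tube argument in the other.

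For the forward direction, I assume $G*Y$ is open in $G\times Y$ and take $(g,f)\in G*C(X,Y)$, so $f(X)\subset Y_{g^{-1}}$. Since $f(X)$ is compact and, for each $y\in f(X)$, the pair $(g,y)$ lies in the open set $G*Y$, I can pick basic open rectangles $S_y\times T_y\subset G*Y$ around each such $(g,y)$. The family $\{T_y\}_{y\in f(X)}$ is an open cover of the compact set $f(X)$, so I extract a finite subcover $T_{y_1},\dots,T_{y_n}$. Setting $S=\bigcap_{i=1}^n S_{y_i}$ and $V=\bigcup_{i=1}^n T_{y_i}$, the set $S\times\langle X,V\rangle$ is an open neighborhood of $(g,f)$ in $G\times C(X,Y)$. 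The verification that it lies inside $G*C(X,Y)$ is then the expected pointwise check: for $(h,p)\in S\times\langle X,V\rangle$ and any $x\in X$, we have $p(x)\in T_{y_i}$ for some $i$ and $h\in S\subset S_{y_i}$, which gives $(h,p(x))\in G*Y$ and hence $p(X)\subset Y_{h^{-1}}$.

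For the reverse direction, I assume $G*C(X,Y)$ is open and start with $(g,y)\in G*Y$. The natural move is to lift to the constant map $c_y$: since $c_y(X)=\{y\}\subset Y_{g^{-1}}$, we have $(g,c_y)\in G*C(X,Y)$, so there is a basic open neighborhood $S\times\bigcap_{j=1}^n\langle K_j,V_j\rangle$ of $(g,c_y)$ contained in $G*C(X,Y)$, where necessarily $y\in V_j$ for each $j$. I claim that $S\times\bigcap_j V_j$ is an open neighborhood of $(g,y)$ inside $G*Y$: for $(h,z)$ in this set, the constant map $c_z$ belongs to every $\langle K_j,V_j\rangle$, so $(h,c_z)$ lies in the neighborhood and therefore in $G*C(X,Y)$, giving $z=c_z(X)\subset Y_{h^{-1}}$.

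I do not expect a serious obstacle here; the hardest part is to organize the forward direction cleanly, because one must be careful to package the finitely many rectangles $S_{y_i}\times T_{y_i}$ into a single compact-open neighborhood of $f$ that works uniformly for every $h\in S$. The trick of taking the \emph{union} of the $T_{y_i}$ (rather than a product of $\langle f^{-1}(T_{y_i}),T_{y_i}\rangle$'s) and then intersecting the $S_{y_i}$'s is what makes the argument go through, and it mirrors the compactness packaging used in the proof of Proposition~\ref{continuidad}.
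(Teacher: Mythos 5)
Your proposal is correct and follows essentially the same route as the paper: the forward direction uses compactness of $f(X)$ to package finitely many rectangles into $\bigcap_i S_{y_i}\times\langle X,\bigcup_i T_{y_i}\rangle$, the reverse direction lifts $(g,y)$ to the constant map $c_y$ and reads off a basic neighborhood, and part (ii) is reduced to part (i) via Proposition~\ref{continuidad}. The only differences are cosmetic (you index the cover by points of $f(X)$ rather than of $X$, and you spell out the reverse-direction verification that the paper leaves implicit).
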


\begin{proof}
	
	(i) First, suppose that $G*Y$ is open in $G\times Y$. We show that $G*C(X,Y)$ is open in $G\times C(X,Y)$.
	
	Let $(g,f)\in G*C(X,Y)$. For each $x\in X$, choose open neighborhoods $U_x$ of $g$ in $G$ and $V_x$ of $f(x)$ in $Y$ such that $U_x\times V_x\subset G*Y$. By the compactness of $f(X)$, there are  $x_1,\dots,x_n\in X$ such that $f(X)\subset\bigcup\limits_{i=1}^nV_{x_i}$. 
		Observe that the set
	\begin{center}
		$W=\bigcap\limits_{i=1}^nU_{x_i}\times \left\langle X, \bigcup\limits_{i=1}^nV_{x_i}\right\rangle$
	\end{center}
	is an open neighborhood of $(g,f)$ in $G\times C(X,Y)$. We claim that $W\subset  G*C(X,Y)$.  Indeed, let $(h,p)\in W$ and let $x\in X$. There exists $i\in\{1,\dots,n\}$ such that $p(x)\in V_{x_i}$, so $(h,p(x))\in U_{x_i}\times V_{x_i}\subset G*Y$. This implies that $p(x)\in Y_{h^{-1}}$ for every $x\in X$, and therefore $p(X) \subset Y_{h^{-1}}$. Consequently, $(h,p) \in G*C(X,Y)$. This shows that $W\subset G*C(X,Y)$, and hence $G*C(X,Y)$ is open in $G\times C(X,Y)$.
	
	Conversely, suppose that $G*C(X,Y)$ is open. Take $(g,y)\in G*Y$, and  let $c_y:X\rightarrow Y$ be the constant map with value $y$. Then $(g,c_y)\in G*C(X,Y)$, so there exists an open neighborhood $U$ of $g$ in $G$, compact subsets $K_1,\dots,K_n$ of $X$, and open subsets $V_1,\dots,V_n$ of $Y,$ such that
$$(g,c_y)\in U\times \left(\bigcap\limits_{i=1}^n\langle K_i,V_i\rangle\right)\subset G*C(X,Y).$$ Since $y\in V:=\bigcap\limits_{i=1}^nV_i$, we obtain $U\times V\subset G*Y$, showing that $G*Y$ is open in $G\times Y$, as desired.
	
	(ii) According to Proposition \ref{continuidad}, it suffices to prove that $G*Y$ is open in $G\times Y$ if and only if $G*C(X,Y)$ is open in $G\times C(X,Y)$, which follows from (i).
	\end{proof}

\subsection{Separation properties and enveloping spaces}\label{globali}				

Let $Y$ be a topological space equipped with a partial action $\theta \colon G * Y \to Y$, and let $X$ be a compact space. 
The main purpose of this section is to investigate the relationship between certain separation properties of the enveloping spaces $Y_G$ and $C(X,Y)_G$.

	According to Proposition \ref{homosemi1} and Theorem~\ref{YencajaenC(X,Y)}, the embedding $c: Y\rightarrow C(X,Y)$, defined in \eqref{jm}, is a $G$-map. Fix $x\in X$ and consider the evaluation map $$\mathrm{ev}_x:C(X,Y)\ni f\mapsto f(x)\in Y.$$ Note that  $\mathrm{ev}_x$ is also a $G$-map. Indeed, for   $(g,f)\in G*C(X,Y)$ we have  $(g,f(x))\in G*Y$, and
				
	\begin{center}
		$\mathrm{ev}_x\big(\hat \theta(g,f)\big)=\hat \theta(g,f)(x)=\theta\big(g,f(x)\big)=\theta\big(g,\mathrm{ev}_x(f)\big).$
	\end{center}
	Moreover,  it is easy to see that the restriction $\mathrm{ev}_x|_{c(Y)}:c(Y)\rightarrow Y$ is the inverse  of $c$. 
				
	We now state the following lemma.
				
	\begin{lemma}\label{embedding}
	\rm{Let $Y$ be a topological space equipped with a topological partial action $\theta$ of  $G$, and let $X$ be a compact space. Then $Y_G$ is $G$-homeomorphic to an invariant subset $Z$ of $C(X,Y)_G$. Moreover, $Z$ is closed whenever $Y$ is Hausdorff.}
	\end{lemma}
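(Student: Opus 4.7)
The plan is to lift the equivariant embedding $c\colon Y \to C(X,Y)$ from \eqref{jm} to an equivariant map between the enveloping spaces and identify its image as the invariant subset $Z$. I would define $c_G\colon Y_G \to C(X,Y)_G$ by $c_G([g,y]):=[g,c_y]$. To verify well-definedness, suppose $(g,y)\,R\,(h,y')$; then $y\in Y_{g^{-1}h}$ and $\theta_{h^{-1}g}(y)=y'$. By \eqref{domindu}, the equality $c_y(X)=\{y\}\subset Y_{g^{-1}h}$ gives $c_y\in C(X,Y)_{g^{-1}h}$, and by \eqref{hat}, $\hat{\theta}_{h^{-1}g}(c_y)=\theta_{h^{-1}g}\circ c_y = c_{y'}$, so $(g,c_y)\,R\,(h,c_{y'})$. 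Continuity of $c_G$ follows from the universal property of the quotient map $q^\theta$: the map $q^{\hat\theta}\circ (\mathrm{id}_G\times c)\colon G\times Y \to C(X,Y)_G$ is continuous and factors through $q^\theta$, inducing $c_G$. Equivariance is immediate from the formula \eqref{action}.

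To invert $c_G$ I would use an evaluation map. Fix $x_0\in X$; as noted in the discussion preceding the lemma, $\mathrm{ev}_{x_0}\colon C(X,Y)\to Y$ is a $G$-map, so the same universal-property argument produces a continuous equivariant map $(\mathrm{ev}_{x_0})_G\colon C(X,Y)_G \to Y_G$ sending $[g,f]\mapsto [g,f(x_0)]$. Since $(\mathrm{ev}_{x_0})_G \circ c_G = \mathrm{id}_{Y_G}$, the map $c_G$ is injective, and on the image $Z:=c_G(Y_G)$ the restriction $(\mathrm{ev}_{x_0})_G|_Z$ is a continuous inverse. This exhibits the claimed $G$-homeomorphism, and $Z$ is $G$-invariant because $c_G$ is equivariant and the enveloping action on $Y_G$ is global.

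For the closedness claim when $Y$ is Hausdorff, the key point is the characterization $Z=\{[g,f]\in C(X,Y)_G \mid f \text{ is a constant map}\}$. One inclusion is immediate; for the other, if $[g,f]=[h,c_y]$ then \eqref{hat} gives $\theta_{h^{-1}g}\circ f = c_y$, and applying the partial inverse $\theta_{g^{-1}h}$ pointwise forces $f=c_{\theta_{g^{-1}h}(y)}$. Consequently $(q^{\hat\theta})^{-1}(Z) = G\times c(Y)$, and since $C(X,Y)_G$ carries the quotient topology, it suffices to show that the set $c(Y)$ of constant maps is closed in $C(X,Y)$. This follows from the description
\[
c(Y)=\bigcap_{x,x'\in X}\{f\in C(X,Y)\mid \mathrm{ev}_x(f)=\mathrm{ev}_{x'}(f)\},
\]
where each set in the intersection is an equalizer of two continuous maps into the Hausdorff space $Y$, hence closed.

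The step I expect to require the most care is the constant-map characterization of $Z$: while showing that $c_G$ is a $G$-homeomorphism onto its image is a formal consequence of the universal property of enveloping spaces, extracting the clean identification $(q^{\hat\theta})^{-1}(Z)=G\times c(Y)$ rests on a careful pointwise inversion of the partial action along $f$, and it is precisely this step that carries the computational weight of the closedness argument.
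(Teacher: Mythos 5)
Your construction of the $G$-homeomorphism is exactly the paper's: the same map $[g,y]\mapsto[g,c_y]$ (your $c_G$ is the paper's $J$), the same partial inverse $[g,f]\mapsto[g,\mathrm{ev}_{x_0}(f)]$ induced by the evaluation map, and the same verification via the identities $K\circ J=\mathrm{id}$ and $J\circ K=\mathrm{id}$ on the image; this part is correct and essentially identical. Where you genuinely diverge is the closedness of $Z$. The paper also reduces to showing that $(q^{\hat\theta})^{-1}(Z)$ is closed, but it then runs a net argument: it takes a convergent net in the preimage, observes that each term has a constant map as its second coordinate, and uses pointwise convergence together with uniqueness of limits in the Hausdorff space $Y$ to conclude that the limit function is again constant. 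You instead identify the preimage statically as $G\times c(Y)$ --- and the pointwise-inversion step you flag does go through, since $\theta_{h^{-1}g}\circ f=c_y$ forces $f$ to be constant with value $\theta_{g^{-1}h}(y)$ --- and then note that
$c(Y)=\bigcap_{x,x'\in X}\{f\mid \mathrm{ev}_x(f)=\mathrm{ev}_{x'}(f)\}$
is closed, each term being the preimage of the diagonal of $Y\times Y$ under the continuous map $(\mathrm{ev}_x,\mathrm{ev}_{x'})$. Your route is shorter, avoids nets entirely, and isolates where the Hausdorff hypothesis enters (closedness of the diagonal); the paper's net computation is doing the same work implicitly, since its net argument also rests on every element of the preimage having constant second coordinate. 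Both arguments are correct.
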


	\begin{proof}
					
	Fix $x\in X$. As observed above, the maps $c:Y\rightarrow C(X,Y)$ and $\mathrm{ev}_x: C(X,Y)\rightarrow Y$ are equivariant.  Hence, the  maps 
	\begin{equation}\label{J}
		J: Y_G\ni [g,y] \mapsto [g,c_y]\in C(X,Y)_G
	\end{equation}
	and
	\begin{center}
	 	$K: C(X,Y)_G\ni [g,f]\mapsto[g,\mathrm{ev}_x(f)]\in Y_G$
	\end{center}  
	are also equivariant. Moreover, for each $[g,z]\in Y_G$ and each $w=[g,c_y]\in J(Y_G)$, we have
	\begin{center}
		$(K\circ J)([g,z])=K([g,c_z])=[g,z]$
	\end{center}
	and
					
	\begin{center}
		$(J\circ K)(w)=J([g,y])=[g,c_y]=w$.
	\end{center}
	
	Therefore, $J$ is a $G$-embedding. Consequently, $Y$ is $G$-homeomorphic to the invariant subset $Z=J(Y_G)$ of $C(X,Y)_G$.
	
	Now suppose that $Y$ is Hausdorff. We show that $Z$ is closed in $C(X,Y)_G$. 
	
	Let $q^{\hat{\theta}}: G\times C(X,Y)\rightarrow C(X,Y)_G$ be the quotient map corresponding to the partial action $\hat\theta$, as defined in \eqref{qo}. It suffices to show that $(q^{\hat{\theta}})\m(Z)$ is closed in $G\times C(X,Y)$.
	
	Suppose that $\big((g_\lambda, f_\lambda)\big)_{\lambda\in \Lambda}$ is a net in $(q^{\hat{\theta}})\m(Z)$, such that $\lim\ \big((g_\lambda,f_\lambda)\big)_{\lambda\in \Lambda}=(g,f)\in G\times C(X,Y)$. We claim that $(g,f)\in (q^{\hat{\theta}})\m(Z)$.  
	
	For each $\lambda\in\Lambda$, there exists $(h_\lambda,y_\lambda)\in G\times Y$ such that $[g_\lambda,f_\lambda]=[h_\lambda, c(y_\lambda)]$. Hence, there exists $k_\lambda\in G$ such that $\exists\ k_\lambda\cdot c(y_\lambda)$ and $(g_\lambda,f_\lambda)=(h_\lambda k_\lambda\m, k_\lambda\cdot c(y_\lambda))$. Moreover, since $\exists\ k_\lambda\cdot c(y_\lambda)$, it follows that  $\exists\ k_\lambda\cdot y_\lambda$ and $c(k_\lambda \cdot y_\lambda)=k_\lambda\cdot c(y_\lambda)$. Thus,
	\begin{center}
		$f_\lambda=k_\lambda\cdot c(y_\lambda)=c(k_\lambda\cdot y_\lambda)\quad$ and  $\quad f=\lim\ (f_\lambda)=\lim\ \big(c(k_\lambda \cdot y_\lambda)\big)$. 
	\end{center}  
	
	Now, for each $z\in X$ we have 
	\begin{center}
		$f(z)=\lim\ \big(c(k_\lambda \cdot y_\lambda)(z)\big)=\lim\ (k_\lambda \cdot y_\lambda).$
	\end{center}
	
	As $Y$ is Hausdorff, it follows that $f$ is the constant map with the value $y=\lim\ (k_\lambda \cdot y_\lambda)$, that is, $f=c(y)$. Consequently,
	$$q^{\hat{\theta}}(g,f)=q^{\hat{\theta}}(g,c(y))=[g,c(y)]=J([g,y])\in J(Y_G)=Z.$$
	
	Hence, $(g,f)\in (q^{\hat{\theta}})\m(Z)$, proving that $(q^{\hat{\theta}})\m(Z)$ is closed.
\end{proof} 
	
	\medskip			
				
	Below, $R_C$ denotes the equivalence relation \eqref{equiv} associated with the enveloping action of the partial action $\hat{\theta}$ of $G$ on $C(X,Y)$. Hence,  $$C(X,Y)_G=\frac{G\times C(X,Y)}{R_C}.$$  Furthermore, for a map $f\in C(X,Y)$ we set $$G^f=\{g\in G\mid (g,f)\in G* C(X,Y)\}.$$ Analogously, for each $y\in Y$ we define  $$G^y=\{g\in G\mid (g,y)\in G* Y\}.$$
				
	\begin{proposition}\label{T1T2nocontinua}
	\rm{Let  $\theta: G*Y\rightarrow Y$ be topological partial action on a topological space $Y$, and $X$ be a compact space. Then the following assertions hold:
	\begin{enumerate}
		\item [(i)]  $Y_G$ is $T_1$ if and only if $C(X,Y)_G$ is $T_1$.\vspace{0.2cm}
			
		\item [(ii)] $Y_G$ is Hausdorff if and only if $C(X,Y)_G$ is Hausdorff.
	\end{enumerate}}
	\end{proposition}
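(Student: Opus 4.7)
The easy direction of both (i) and (ii) is already built in: by Lemma \ref{embedding}, the map $J\colon Y_G\to C(X,Y)_G$ is a $G$-embedding, so $Y_G$ is $G$-homeomorphic to the subspace $Z=J(Y_G)$ of $C(X,Y)_G$. Since the $T_1$ and Hausdorff properties pass to subspaces, if $C(X,Y)_G$ satisfies either, then so does $Y_G$. Thus the whole task reduces to proving the two converse implications.

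For the converse, the plan is to construct a continuous injection from $C(X,Y)_G$ into a product of copies of $Y_G$, and then exploit the fact that a continuous injection into a $T_1$ (resp.\ Hausdorff) space forces the domain to be $T_1$ (resp.\ Hausdorff). Concretely, for each $x\in X$ the evaluation $\mathrm{ev}_x\colon C(X,Y)\to Y$ is continuous (its preimage of an open $V\subset Y$ is the subbasic set $\langle\{x\},V\rangle$) and equivariant with respect to $\hat\theta$ and $\theta$, as was already noted above. Since $q^{\hat\theta}$ is a quotient map (continuous, surjective and open by \cite[Proposition 3.9 (ii)]{KL}), the composition $q^\theta\circ(\mathrm{id}_G\times\mathrm{ev}_x)$ descends to a continuous map
\[
\widetilde{\mathrm{ev}_x}\colon C(X,Y)_G\to Y_G,\qquad [g,f]\mapsto [g,f(x)].
\]
These assemble into a continuous map $\Phi\colon C(X,Y)_G\to(Y_G)^X$ with $\Phi([g,f])=\bigl([g,f(x)]\bigr)_{x\in X}$, which is continuous because its projection onto each coordinate is $\widetilde{\mathrm{ev}_x}$.

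The only non-routine step is the injectivity of $\Phi$, and it is the crux of the argument. The key observation I will record is that the relation $R_C$ defining $C(X,Y)_G$ is the pointwise lift of the relation $R$ defining $Y_G$: from $\mathrm{dom}(\hat\theta(g))=\langle X,Y_{g^{-1}}\rangle$ and $\hat\theta_g(f)=\theta_g\circ f$ it follows that $(g,f)\,R_C\,(h,r)$ iff $(g,f(x))\,R\,(h,r(x))$ for every $x\in X$. Consequently, $\Phi([g,f])=\Phi([h,r])$ implies $(g,f(x))\,R\,(h,r(x))$ for all $x$, which gives $f(X)\subset Y_{g^{-1}h}$ and $\hat\theta_{h^{-1}g}(f)=\theta_{h^{-1}g}\circ f=r$, so $[g,f]=[h,r]$.

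Once $\Phi$ is continuous and injective, the conclusion is immediate: arbitrary products of $T_1$ (resp.\ Hausdorff) spaces are $T_1$ (resp.\ Hausdorff), so if $Y_G$ has the separation property in question then so does $(Y_G)^X$, and hence so does $C(X,Y)_G$. I do not foresee any genuine obstacle; the argument hinges on the pointwise description of $R_C$, which is the one substantive computation, and everything else is formal.
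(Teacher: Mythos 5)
Your proof is correct, but it takes a genuinely different route from the paper's. Both arguments dispose of the easy direction in the same way, via Lemma \ref{embedding} and the fact that $T_1$ and Hausdorffness are hereditary. For the converse implications, however, the paper works directly with the quotient map $q^{\hat\theta}$: in (i) it proves each fibre $(q^{\hat\theta})^{-1}(\{[g,f]\})$ is closed by a net argument, using that the fibres of $q^{\theta}$ are closed when $Y_G$ is $T_1$ and invoking the Hausdorffness of $G$ to glue the pointwise parameters $h_x$ into a single $h$; in (ii) it shows the relation $R_C$ is closed in $\big(G\times C(X,Y)\big)^2$ by a two-case analysis that reduces to the closedness of $R$ (the ``open quotient is Hausdorff iff the relation is closed'' criterion). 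You replace both computations by the single observation that $R_C$ is the pointwise lift of $R$ --- which is exactly what \eqref{domindu} and \eqref{hat} give --- so that the evaluations descend to a continuous injection $\Phi\colon C(X,Y)_G\to (Y_G)^X$, and the separation properties are pulled back along it. Your verification of the equivalence $(g,f)\,R_C\,(h,r)\Leftrightarrow (g,f(x))\,R\,(h,r(x))$ for all $x\in X$ is sound in both directions, and the descent of $q^{\theta}\circ(\mathrm{id}_G\times\mathrm{ev}_x)$ through the quotient is legitimate since $q^{\hat\theta}$ carries the quotient topology. Your approach is shorter, treats (i) and (ii) uniformly, avoids nets and any explicit appeal to the Hausdorffness of $G$, and would yield for free any further separation property that is productive and inherited along continuous injections (e.g.\ $T_0$ or functional Hausdorffness); what the paper's route buys in exchange is explicit structural information, namely the identification of the fibres of $q^{\hat\theta}$ and concrete separating basic open sets in $\big(G\times C(X,Y)\big)^2$.
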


	\begin{proof}
	(i) By Lemma \ref{embedding}, it suffices to show	 that $C(X,Y)_G$ is $T_1$ whenever $Y_G$ is $T_1$. For this purpose, fix $[g,f]\in C(X,Y)_G$. We claim that $(q^{\hat{\theta}})\m(\{[g,f]\})$ is closed in $G\times C(X,Y)$, where $q^{\hat{\theta}}: G\times C(X,Y)\rightarrow C(X,Y)_G$ is the  quotient map associated with $\hat{\theta}.$ 
	
	Observe that
					
	\begin{center}
		$(q^{\hat{\theta}})\m(\{[g,f]\})=\{(gh\m,h\cdot f)\mid h\in G^f\}$.
	\end{center}
	
		Let $\big((gh_\lambda\m,h_\lambda\cdot f)\big)_{\lambda\in \Lambda}$ be a net  in $(q^{\hat{\theta}})\m(\{[g,f]\})$, and let $(k,p)\in G\times C(X,Y)$ be such that $\lim \big((gh_\lambda\m,h_\lambda\cdot f)\big)_{\lambda\in \Lambda}=(k,p)$. We shall  show that $(k,p)\in (q^{\hat{\theta}})\m(\{[g,f]\})$. 
		Take $x\in X$. For each $\lambda\in \Lambda$, we have $h_\lambda\in G^f$, then $\exists\ h_\lambda\cdot f(x)$. Moreover, since $\lim \big(h_\lambda\cdot f(x)\big)_{\lambda\in\Lambda}=p(x)$ and hence $Y_G$ is $T_1.$ In addition,  it follows that
						
		\begin{center}
		$(k,p(x))=\lim \big((gh_\lambda\m,h_\lambda\cdot f(x))\big)_{\lambda\in \Lambda}\in \overline{(q^{\theta})\m(\{[g,f(x)]\})}=(q^{\theta})\m(\{[g,f(x)]\})$.
		\end{center}
	Thus, there exists $h_x\in G^{f(x)}$ such that $(k,p(x))=(gh\m_x,h_x\cdot f(x)).$  Now, for each $x\in X$, we have $\lim\ (g h\m_\lambda)_{\lambda\in \Lambda}=k=gh\m_x$, and therefore $\lim\ (h_\lambda)_{\lambda\in\Lambda}=h_x$. Since $G$ is Hausdorff, it follows that $h_x=h_y$ for each $x,y\in X$. 
			Let $h=\lim\  (h_\lambda)_{\lambda\in \Lambda}$. Then for every $x\in X$ we have
$$(k,p(x))=(gh\m,h\cdot f(x)).$$
		Consequently,  $(k,p)=(gh\m,h\cdot f)\in(q^{\hat{\theta}})\m(\{[g,f]\})$. This completes the proof of (i).

	(ii) It is enough to show that $C(X,Y)_G$ is Hausdorff whenever $Y_G$ is Hausdorff. To this end, let us prove that $R_C$ is closed in $\big(G\times C(X,Y)\big)^2.$ 
	
	Let $(g,p), (h,q)\in G\times C(X,Y)$ be  such that $[g,p]\neq [h,q]$. There are two cases to consider.
					
	\textbf{Case 1}: $p\notin C(X,Y)_{g\m h}=\langle X,Y_{g\m h}\rangle$. Hence, there exists $x\in X$ such that $p(x)\notin Y_{g\m h}$. In particular, $[g,p(x)]\neq [h,q(x)]$. Since $Y_G=\frac{G\times Y}{R}$ is Hausdorff, there exist open subsets $A, C\subset G$ and  $B, D\subset Y$ such that
	$$\big((g,p(x)),(h,q(x))\big)\in A\times B\times C\times D\subset (G\times Y)^2\setminus R.$$

	Then $\big((g,p),(h,q)\big)\in \big(A\times \langle\{x\},B\rangle\big)\times\big(C\times \langle\{x\},D\rangle\big)$. We claim    that
	
	\begin{equation}\label{claim}
		\big(A\times \langle\{x\},B\rangle\big)\times\big(C\times \langle\{x\},D\rangle\big)\subset \big(G\times C(X,Y)\big)^2\setminus R_C.
	\end{equation}
	
	To verify \eqref{claim}, take $\big((k,r),(l,s)\big)\in \big(A\times \langle\{x\},B\rangle\big)\times\big(C\times \langle\{x\},D\rangle\big)$. If $[k,r]=[l,s]$, then $r\in C(X,Y)_{k\m l}$ and $\hat\theta_{l\m k}(r)=\theta_{l\m k}\circ r=s$. In particular, $[k,r(x)]=[l,s(x)]$ and 
	$$\big((k,r(x)),(l,s(x))\big)\in R\cap (A\times B\times C\times D)$$ which is a contradiction. Thus, $\big((g,p),(h,q)\big)\notin R_C$ and \eqref{claim} is proved.

	\textbf{Case 2}: $p\in C(X,Y)_{g\m h}.$ In this case, we have $\theta_{h\m g}\circ p\neq q$. Hence, there exists $x_0\in X$ such that $\theta_{h\m g}(p(x_0))\neq q(x_0)$. Thus, $[g,p(x_0)]\neq [h,q(x_0)]$. By the same argument used in {\bf Case 1}, there exists an open neighborhood $E$ of $\big((g,p(x)),(h,q(x))\big)$ in $\big(G\times C(X,Y)\big)^2$ such that $E\cap R_C=\emptyset$.  This shows that $R_C$ is a closed subset of $(G\times C(X,Y))^2$, and consequently $C(X,Y)_G$ is a Hausdorff space.
\end{proof}
				
	The following corollary is a direct consequence of  \cite[Proposition 3.1]{RGG} and part (ii) in Proposition \ref{T1T2nocontinua}.
	
	\begin{corollary}{\rm Let $G$ be a countable discrete group  acting partially on a compact space  $Y.$    Then  the following assertions are equivalent:
	\begin{enumerate}
	\item  [(i)] $Y_g$ is clopen for all $g\in G.$\vspace{0.2cm}
	\item   [(ii)] $Y_G$ is Hausdorff.\vspace{0.2cm}
	\item  [(iii)]  $C(X,Y)_G$ is Hausdorff.
	\end{enumerate}}
	\end{corollary}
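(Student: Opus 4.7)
The plan is to combine two ingredients: the cited result \cite[Proposition 3.1]{RGG}, which for a countable discrete group acting partially on a compact space characterizes Hausdorffness of the enveloping space via clopenness of the domains, and Proposition~\ref{T1T2nocontinua}(ii), which transfers Hausdorffness back and forth between $Y_G$ and $C(X,Y)_G$ (for $X$ compact and $Y$ arbitrary). Together, these give a short chain of implications.

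More precisely, I would first observe that $Y$ is compact and $G$ is countable discrete, so \cite[Proposition 3.1]{RGG} applies directly to the partial action $\theta$ of $G$ on $Y$ and yields the equivalence (i)$\Leftrightarrow$(ii). Next, since $X$ is compact (the standing assumption of Section~\ref{pafun}) and $\hat\theta$ is the induced partial action on $C(X,Y)$ from \eqref{hat}, Proposition~\ref{T1T2nocontinua}(ii) gives the equivalence (ii)$\Leftrightarrow$(iii). Chaining these two equivalences produces the three-way equivalence in the statement.

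There is essentially no obstacle: both cited results apply as stated, since (a) the hypotheses of \cite[Proposition 3.1]{RGG} (countable discrete group, compact space, partial action) match the corollary's hypotheses verbatim, and (b) Proposition~\ref{T1T2nocontinua}(ii) requires only that $X$ be compact and $\theta$ a topological partial action on $Y$, both of which hold here. The only thing to double-check is that the notation is consistent: the $X$ appearing in $C(X,Y)_G$ is the compact parameter space fixed in Section~\ref{pafun} (not the space being acted upon, which is $Y$), and with this reading the cited results compose directly. Hence the proof is the two-line argument (i)$\Leftrightarrow$(ii) by \cite[Proposition 3.1]{RGG} and (ii)$\Leftrightarrow$(iii) by Proposition~\ref{T1T2nocontinua}(ii).
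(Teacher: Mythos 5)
Your proposal is correct and matches the paper exactly: the corollary is stated there as a direct consequence of \cite[Proposition 3.1]{RGG} (giving (i)$\Leftrightarrow$(ii)) combined with Proposition~\ref{T1T2nocontinua}(ii) (giving (ii)$\Leftrightarrow$(iii)). Nothing further is needed.
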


	When $Y$ is equipped with a nice partial action of $G$, item  (ii) in Proposition \ref{T1T2nocontinua} can be proved using an alternative argument, as shown below.
				
	\begin{theorem}\label{embed}
	\rm{Let $Y$ be a topological space and let $\theta: G*Y\rightarrow Y$ be a nice partial action. Then $C(X,Y)_G$ is homeomorphic to an open subset of $C(X,Y_G)$.}
	\end{theorem}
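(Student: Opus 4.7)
The plan is to realize $C(X,Y)_G$ inside $C(X,Y_G)$ by first embedding $C(X,Y)$ into $C(X,Y_G)$ via post-composition with $\iota^\theta$, and then identifying $C(X,Y)_G$ with the $G$-saturation of this embedded copy under the natural global action on $C(X,Y_G)$ induced by $\mu^\theta$.

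First, I would invoke the niceness of $\theta$ together with \cite[Theorem~3.13]{KL} to conclude that $\iota^\theta\colon Y\to Y_G$ is an \emph{open} embedding, so that $\iota^\theta(Y)$ is open in $Y_G$. This allows me to define $\Phi\colon C(X,Y)\to C(X,Y_G)$ by $\Phi(f)=\iota^\theta\circ f$, and to argue---using that $X$ is compact and the defining properties of the compact-open topology---that $\Phi$ is a homeomorphism onto the open subset $U:=\langle X,\iota^\theta(Y)\rangle$ of $C(X,Y_G)$.

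Next, I would observe that the continuous global action $\mu^\theta$ of $G$ on $Y_G$ induces, via post-composition, a continuous global action $\tilde\mu\colon G\times C(X,Y_G)\to C(X,Y_G)$ given by $\tilde\mu(g,h)=\mu^\theta_g\circ h$; continuity here relies crucially on the compactness of $X$. I would then verify that under $\Phi$, the partial action $\hat\theta$ coincides with the restriction of $\tilde\mu$ to the open subset $U$ in the sense of Example~\ref{induc}. Concretely, $f(X)\subset Y_{g^{-1}}$ is equivalent to both $\Phi(f)\in U$ and $\tilde\mu_g(\Phi(f))\in U$, using the identity $\iota^\theta(Y_{g^{-1}})=\iota^\theta(Y)\cap\mu^\theta_{g^{-1}}(\iota^\theta(Y))$; and the relation $\mu^\theta_g\circ\iota^\theta=\iota^\theta\circ\theta_g$ on $Y_{g^{-1}}$ yields $\tilde\mu_g\circ\Phi=\Phi\circ\hat\theta_g$ on the appropriate domain.

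Finally, I would set $V:=\bigcup_{g\in G}\tilde\mu_g(U)$, which is open in $C(X,Y_G)$ as a union of homeomorphic images of $U$. The restriction of $\tilde\mu$ to $V$ is a continuous global action containing $U$ as an open subset whose induced partial action agrees with $\hat\theta$ under $\Phi$, and $G\cdot U=V$ by construction. Invoking Proposition~\ref{nice} to ensure that $\hat\theta$ is nice, together with the uniqueness of the minimal globalization of a nice partial action (as in \cite[Theorem~3.13]{KL}), I would conclude that $C(X,Y)_G$ is $G$-homeomorphic to $V$, realizing it as an open subset of $C(X,Y_G)$. The main obstacle I anticipate is twofold: verifying continuity of the post-composition action $\tilde\mu$, an application of the exponential law that makes essential use of the compactness of $X$; and carrying out the uniqueness step carefully enough to produce a genuine $G$-homeomorphism onto the open subset $V$.
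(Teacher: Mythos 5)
Your proposal is correct and follows essentially the same route as the paper: embed $C(X,Y)$ via $f\mapsto\iota^\theta\circ f$ as the open subset $\langle X,\iota^\theta(Y)\rangle$ of $C(X,Y_G)$ carrying the restriction of the post-composition action induced by $\mu^\theta$, then identify $C(X,Y)_G$ with the $G$-saturation of that open set. The only difference is in the last step, where the paper proves a short lemma (Lemma~\ref{openembedding}) exhibiting the explicit open continuous bijection $[g,u]\mapsto\beta(g,u)$ from the enveloping space of an open subset onto its saturation, whereas you appeal to uniqueness of the globalization; both are valid.
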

				
	The proof of Theorem \ref{embed} follows from the following lemma.
				
	\begin{lemma}\label{openembedding}
	\rm{Let $U$ be an open subset of a topological space $Y$ equipped with a conti\-nuous action $\beta: G\times Y\rightarrow Y$ of $G$. Then, the enveloping space $U_G$, of the restriction of $\beta$ to $U$, is homeomorphic to an open subset of $Y$.}
	\end{lemma}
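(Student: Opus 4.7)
The plan is to realize $U_G$ explicitly as the open subset $G \cdot U := \bigcup_{g \in G} \beta_g(U) \subset Y$ via the obvious evaluation $\tilde{\Phi}[g, u] = g \cdot u$. Writing $\theta$ for the restriction of $\beta$ to $U$, Example~\ref{induc} gives $U_{g^{-1}} = \{u \in U : g \cdot u \in U\}$ and $\theta_g(u) = g \cdot u$.

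First I would define the continuous map $\Phi : G \times U \to Y$, $\Phi(g, u) = \beta(g, u)$, and verify that it is constant on the $R$-equivalence classes of~\eqref{equiv}: if $(g,x) R (h, y)$, then $y = \theta_{h^{-1}g}(x) = h^{-1}g \cdot x$, so $\Phi(h, y) = h \cdot y = g \cdot x = \Phi(g, x)$. The universal property of the quotient topology then yields a continuous map $\tilde{\Phi} : U_G \to Y$ with $\tilde{\Phi}[g,u] = g \cdot u$, whose image is $\bigcup_{g \in G} \beta_g(U)$. This image is open in $Y$ because each $\beta_g$ is a self-homeomorphism of $Y$ (with continuous inverse $\beta_{g^{-1}}$). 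For injectivity, if $g \cdot u = h \cdot v$ with $u, v \in U$, then $h^{-1}g \cdot u = v \in U$, which is precisely the condition $u \in U_{g^{-1}h}$ together with $\theta_{h^{-1}g}(u) = v$; hence $(g,u) R (h,v)$, so $[g,u] = [h,v]$.

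It then remains to prove that $\tilde{\Phi}$ is an open map onto its image. I would invoke \cite[Proposition~3.9~(ii)]{KL} to conclude that the quotient map $q^\theta : G \times U \to U_G$ is an open surjection; hence it suffices to show that $\Phi = \tilde{\Phi} \circ q^\theta$ is open, and on a basic open set $V \times W \subset G \times U$ one has $\Phi(V \times W) = \bigcup_{g \in V} \beta_g(W)$, a union of opens in $Y$. Combining these steps, $\tilde{\Phi}$ is a continuous open injection with open image, and therefore a homeomorphism of $U_G$ onto an open subset of $Y$. The only real subtlety I expect is the index-bookkeeping that translates ``$x \in U_{g^{-1}h}$'' in~\eqref{equiv} into the concrete membership $h^{-1}g \cdot x \in U$; the remainder is a direct consequence of the quotient's universal property combined with the fact that each $\beta_g$ is a homeomorphism of $Y$.
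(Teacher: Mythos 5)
Your proposal is correct and follows essentially the same route as the paper: both define the evaluation $[g,u]\mapsto\beta(g,u)$, factor it through the open continuous quotient $q^\theta:G\times U\to U_G$, and deduce that the induced map is a continuous open injection onto $\bigcup_{g\in G}\beta_g(U)$. You simply spell out the well-definedness, injectivity, and openness steps that the paper leaves to the commutative diagram.
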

			
	\begin{proof}
	Set $j: U_G\ni [g,u]\mapsto \beta(g,u)\in   Y.$  Notice that $j$ is well-defined and  injective.  Let $p:G\times U\rightarrow U_G$ be the quotient map associated with the induced partial action of $\beta$ on $U.$ By \cite[(ii) Proposition 3.9]{KL}, the map $p$ is a continuous open surjection. Moreover,  since the  diagram

		\[
		\xymatrix{G\times U\ar[d]_-{p}\ar[r]^-{\mathrm{id}_G\times s}& G\times Y\ar[r]^-{\beta} &Y\\
			U_G \ar[urr]_-{j}& 
		}
		\]
	is commutative, where $s: U\hookrightarrow Y$ is the inclusion map, it follows that $j$ is open  and continuous.  Consequently, $U_G$ is homeomorphic to $j(U_G)$, which is  an open subset of $Y$.
	\end{proof}
				
	\noindent \textit{Proof of Theorem \ref{embed}}.
	\vspace{0.1cm}
Consider the map $\iota: Y\ni y\mapsto [e,y] \in Y_G$, which is an  open embedding by \cite[ Proposition 3.12]{KL}.  Since the space $Y_G$ is equipped with the continuous global action $\mu:G\times Y_G\rightarrow Y_G$ defined in (\ref{action}),  then it   follows from Proposition \ref{continuidad} that the  corresponding  global action $\hat{\mu}:G\times C(X,Y_G)\rightarrow C(X,Y_G)$ is also continuous. Moreover, observe that $C(X,Y)$ is $G$-homeomorphic to the open subset $C(X,\iota(Y))$ of $C(X,Y_G)$, where $C(X,\iota(Y))$ is endowed with the restriction of $\hat\mu$  to $C(X,\iota(Y))$.  The homeomorphism is given by the  function
	
		$$\eta: C(X,Y)\ni f\mapsto \iota\circ f \in C(X,\iota(Y)).$$
 Consequently, $C(X, Y)_G$ is homeomorphic to $C(X, \iota(Y))_G$, and Lemma \ref{openembedding} implies that $C(X, Y)_G$ is homeomorphic to an open subset of $C(X, Y_G)$.
	\qed
				
	\begin{corollary}\label{regme}
	\rm{Let $\theta:G*Y\rightarrow Y$ be a nice partial action on a topological space  $Y$, and let $X$ be a compact space. Then  the following assertions hold:
	\begin{enumerate}
	\item [(i)] $Y_G$ is metrizable if and only if $C(X,Y)_G$ is metrizable.\vspace{0.2cm}
	\item [(ii)] $Y_G$ is regular if and only if $C(X,Y)_G$ is regular.
	\end{enumerate}}
	\end{corollary}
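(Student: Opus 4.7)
The plan is to combine Theorem \ref{embed} and Lemma \ref{embedding} with standard properties of the compact-open topology, and then invoke heredity of the relevant topological properties under open and closed subspaces.

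For the forward directions in both (i) and (ii), I would assume $Y_G$ is metrizable (resp.\ regular) and first upgrade this to $C(X, Y_G)$ having the same property. For metrizability this uses crucially that $X$ is compact: the compact-open topology on $C(X, Y_G)$ then coincides with the topology of uniform convergence with respect to any compatible metric on $Y_G$, so $C(X, Y_G)$ is metrizable. For regularity, I would appeal to the classical fact that the compact-open topology on $C(X, Z)$ is regular whenever $Z$ is. Theorem \ref{embed} identifies $C(X,Y)_G$ with an open subset of $C(X, Y_G)$, and since both metrizability and regularity are hereditary under open subspaces, $C(X,Y)_G$ inherits the property.

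For the converse implications, I would start from the assumption that $C(X,Y)_G$ is metrizable (resp.\ regular). Either hypothesis forces $C(X,Y)_G$ to be Hausdorff, so by Proposition \ref{T1T2nocontinua}(ii) the space $Y_G$ is Hausdorff. Moreover, since $\iota\colon Y \to Y_G$ from \eqref{iota} is a continuous injection (in fact an open embedding by \cite[Proposition 3.12]{KL} under the nice hypothesis), $Y$ itself is Hausdorff. Lemma \ref{embedding} then gives a $G$-homeomorphism between $Y_G$ and a closed invariant subset $Z$ of $C(X,Y)_G$. Closed subspaces of metrizable (resp.\ regular) spaces retain those properties, so $Y_G$ is metrizable (resp.\ regular), completing the proof.

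The main obstacle is the auxiliary step verifying that $C(X, Y_G)$ inherits metrizability or regularity from $Y_G$; this is standard but the metrizability statement genuinely uses the compactness of $X$ assumed throughout Section \ref{pafun}. Beyond that ingredient, the argument is a straightforward bookkeeping built on the two structural results (Theorem \ref{embed} and Lemma \ref{embedding}) already established.
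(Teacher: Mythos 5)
Your argument is essentially the paper's own proof: compactness of $X$ upgrades metrizability (via the uniform metric) and regularity from $Y_G$ to $C(X,Y_G)$, Theorem \ref{embed} transfers the property to the open subset $C(X,Y)_G$, and Lemma \ref{embedding} handles the converse. The only difference is your detour through Hausdorffness to get that $Z$ is \emph{closed}, which is unnecessary (and slightly delicate for regularity, which need not imply Hausdorff): both metrizability and regularity are hereditary for arbitrary subspaces, so the subspace embedding from Lemma \ref{embedding} already suffices, exactly as in the paper.
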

				
	\begin{proof}
	(i) Suppose that $Y_G$ is metrizable. Since $X$ is compact, the compact-open topology on $C(X,Y_G)$ coincides with the topology induced by the uniform metric; hence $C(X,Y_G)$  is metrizable. Therefore, by Theorem \ref{embed}, the space $C(X,Y)_G$ is also metrizable.  Conversely, if $C(X,Y)_G$ is metrizable, then Lemma \ref{embedding} shows that $Y_G$ is  metrizable as well.
	
	(ii) If $Y_G$ is regular, then $C(X,Y_G)$ is regular. Consequently, by Theorem \ref{embed}, the space $C(X,Y)_G$ is regular. Conversely, the regularity of $C(X,Y)_G$, together with Lemma \ref{embedding}, yields that $Y_G$ is regular.
	\end{proof}
		
	\begin{remark}{\rm The  following commutative diagram states 
			   a relation between the enveloping maps	$\mu^\theta$ and $\mu^{\hat\theta}$  of $\theta$ and $\hat\theta$ respectively, and the action $\hat{\mu^\theta}$ of $G$ on  $C(X, Y_G).$ 
						
	\[\begin{tikzcd}
		G\times C(X, Y_G) & C(X, Y_G) \\
		G\times C(X, Y)_G & C(X, Y)_G \\
		G\times Y_G & Y_G
		\arrow["\hat{\mu^\theta}", from=1-1, to=1-2]
		\arrow [ "{\rm id}_G\times\xi", from=2-1, to=1-1]
		\arrow["\xi  ",  from=2-2, to=1-2]
		\arrow["\mu^{\hat\theta}", from=2-1, to=2-2]
		\arrow [ "{\rm id}_G\times J ", from=3-1, to=2-1]
		\arrow["\mu^{\theta}",  from=3-1, to=3-2]
		\arrow[ "J ",  from=3-2, to=2-2]
	\end{tikzcd},\]
	where $J$  is the continuous map  given by \eqref{J},  $\iota^\theta$ is defined in   \eqref{iota} and $$\xi: C(X,Y)_G\ni [g, \psi]\mapsto  \mu^\theta_g\circ\iota^\theta\circ \psi\in C(X,Y_G)$$  is the   embedding given  the proof of Theorem \ref{embed}.}
\end{remark}

\subsection{A working example}\label{examp} Let $\theta$ be a nice partial action on a topological space $Y$. By Theorem \ref{embed}, we know that $C(X,Y)_G$ is $G$-homeomorphic to an open subset of $C(X,Y_G).$ Below we shall present an example where   $C(X,Y)_G$  and $C(X,Y_G)$ are $G$-homeomorphic.

\begin{example}
	\rm{Let \( G = \mathbb{Z} \) equipped with the discrete topology. We consider the global action on $\R$ given by
	$$\beta: G\times \mathbb{R} \ni (n,t)\mapsto n+t\in \R.$$
Let $Y=(0,\infty)$ be considered as a subspace of $\mathbb{R},$ and  $\theta=(Y_n, \theta_n)_{n\in \Z}$ the restriction of $\beta$ to $Y$. Specifically,
	
	\begin{center}
		$Y_n = (0,\infty)\cap (n,\infty) = (\max\,\{0,n\},\infty)\quad$ and $\quad \theta_n :Y_{-n}\ni y\mapsto n+y\in Y_n.$
	\end{center}	
Since $Y$ is open we have, from Example~\ref{induc}, that $\theta$ is a nice  partial action.}
\end{example}

\begin{proposition}
The enveloping space $Y_G$ of the partial action $\theta$ defined above is $G$-homeomorphic to $\mathbb{R}$ endowed with the global action $\beta: G\times \mathbb{R} \ni (n,t)\mapsto n+t\in \R.$
\end{proposition}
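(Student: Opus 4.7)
The plan is to apply Lemma \ref{openembedding} directly to the global action $\beta\colon G\times\R\to\R$ together with the open subset $U=(0,\infty)$. By that lemma, the enveloping space $Y_G$ of the restriction $\theta$ is homeomorphic to the open subset $j(Y_G)\subseteq \R$, where
\[
j\colon Y_G\ni[n,y]\mapsto \beta(n,y)=n+y\in\R.
\]
So I would first quote the lemma to get $j$ a homeomorphism onto its image, and then identify that image explicitly.

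Next I would show that $j$ is surjective, so that $j(Y_G)=\R$. Given any $t\in\R$, pick any integer $n\in\Z$ with $n<t$; then $y:=t-n\in(0,\infty)=Y$, and $(n,y)\in G\times Y$ satisfies $j([n,y])=n+y=t$. Hence $j$ is a homeomorphism from $Y_G$ onto $\R$.

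The final step is to verify that $j$ is $G$-equivariant, that is, that it intertwines the enveloping action $\mu^\theta$ on $Y_G$ with the global action $\beta$ on $\R$. From the definition in \eqref{action}, $\mu^\theta(m,[n,y])=[m+n,y]$ for every $m\in\Z$ and $[n,y]\in Y_G$. Therefore
\[
j\bigl(\mu^\theta(m,[n,y])\bigr)=j([m+n,y])=(m+n)+y=m+(n+y)=\beta\bigl(m,j([n,y])\bigr),
\]
which is exactly the required equivariance; since $j$ is a bijective homeomorphism, its inverse is automatically a $G$-map as well.

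I do not anticipate any genuine obstacle here: the content of the proof is just the combination of the general Lemma \ref{openembedding} with the obvious fact that every real number differs from a positive real by an integer. The only point that deserves some care is making the algebra in $\mu^\theta$ consistent with the additive convention on $G=\Z$, but this is routine once the quotient map and the equivalence relation \eqref{equiv} are unfolded for $\beta(n,t)=n+t$.
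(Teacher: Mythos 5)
Your proposal is correct, and it reaches the conclusion by a slightly different route than the paper, even though the underlying map is the same in both cases, namely $[n,y]\mapsto n+y$. The paper argues directly: it introduces $\Phi\colon \Z\times Y\ni (n,y)\mapsto n+y\in\R$, observes that it is continuous and surjective, and checks that $\Phi(n,y)=\Phi(m,z)$ holds exactly when $(n,y)\,R\,(m,z)$ for the relation \eqref{equiv}, so that $\Phi$ descends to a bijection $Y_G\to\R$, which is then declared to be a $G$-homeomorphism. You instead invoke Lemma \ref{openembedding} with the ambient global action $\beta$ on $\R$ and $U=(0,\infty)$, which hands you immediately that $j([n,y])=n+y$ is an open embedding of $Y_G$ into $\R$, and you then only need surjectivity (every real differs from a positive real by an integer) and the equivariance computation with $\mu^\theta$ from \eqref{action}. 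Your route is arguably the cleaner of the two: the paper's phrase ``continuous and surjective'' does not by itself justify that the induced map on the quotient is a homeomorphism (one needs $\Phi$ to be open or a quotient map, which is true here but left implicit), whereas Lemma \ref{openembedding} supplies the openness of $j$ for free. The trade-off is that you lean on the general lemma where the paper's computation is self-contained and makes the identification of the fibers of $\Phi$ with the $R$-classes explicit. Both arguments are complete modulo these presentational choices, and your equivariance verification is exactly what the paper leaves to the reader.
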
		 
			
\begin{proof}

The globalization $Y_G$  is the quotient space $Y_G = (G\times Y)/{R},$
where $R$ is the equivalence relation introduced in  \eqref{equiv}. Explicitly, $(n,y) R (m,z)$ if and only if $y\in  Y_{m-n}$ and $z=\theta_{n-m}(y)$; equivalently, $y\in  (\max\,\{0,m-n\},\infty)$ and $m+z=n+y.$ 

Define the map
$$\Phi: \Z\times Y\ni (n, y)\mapsto n+y\in \R.$$ 

This map is  continuous and surjective. Moreover, $\Phi(n,y)=\Phi(m,z)$ if and only if  $(n,y) R (m,z)$. Hence, the induced map $Y_G\ni [n,y]\mapsto n+y\in \R$ is a $G$-homeomorphism.
\end{proof}
									
Let $X$ be a topological space. Consider the space $C(X,Y)$ endowed with the partial action 
	\[
		\hat\theta: G*C(X, Y)\ni (n,f)\mapsto \theta_n\circ f\in C(X, Y),
	\]
induced by $\theta$. Note that $\hat\theta$ is a nice partial action by  Proposition \ref{nice}.

For each $n\in G$, we have
	\[
	C(X,Y)_n = \{ f\in C(X,Y)\mid f(x)> \max\,\{0,n\}\ \text{for all } x\in X\}.
	\]

Using this notation, we can now state the following result.

\begin{theorem}
\rm{The enveloping space $C(X,Y)_G$ of the partial action $\hat\theta$ is $G$-homeomorphic to the space $C(X,\R)$.}
\end{theorem}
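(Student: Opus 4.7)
The plan is to exhibit the desired $G$-homeomorphism explicitly as the map $\Xi\colon C(X,Y)_G\to C(X,\R)$ sending $[n,f]$ to the function $n+f$, and to verify its properties by combining Theorem~\ref{embed} with a short compactness argument.

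First I would show that $\Xi$ is well defined and injective by unwinding the equivalence relation $R_C$ on $\Z\times C(X,Y)$: if $[n,f]=[m,g]$, then (say $n\geq m$) $f\in C(X,Y)_{m-n}$ and $g=\hat\theta_{n-m}(f)=(n-m)+f$, so $n+f=m+g$ pointwise; conversely, $n+f=m+g$ forces $g=(n-m)+f$, and the condition $f\in C(X,Y)_{m-n}=C(X,Y)$ is automatic because $Y_{m-n}=(\max\{0,m-n\},\infty)=Y$ whenever $n\geq m$. $G$-equivariance follows directly from $\mu^{\hat\theta}_k[n,f]=[k+n,f]$ and the formula $\hat\beta_k(h)=k+h$ for the global action on $C(X,\R)$.

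Next I would identify $\Xi$ with the composition of the open $G$-embedding $\xi\colon C(X,Y)_G\to C(X,Y_G)$ supplied by Theorem~\ref{embed} and the $G$-homeomorphism $C(X,Y_G)\to C(X,\R)$ obtained by post-composition with the identification $Y_G\cong\R$, $[n,y]\mapsto n+y$, established in the previous proposition. Since $\iota^\theta(y)=[0,y]$ and $\mu^\theta_n[0,y]=[n,y]$, the formula $\xi([n,f])(x)=\mu^\theta_n\circ\iota^\theta\circ f(x)=[n,f(x)]$ transports precisely to $n+f(x)$ under this identification. This already shows that $\Xi$ is continuous and an open embedding, so only surjectivity of $\Xi$ remains to be established.

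The main (and essentially the only) obstacle is surjectivity, and this is where compactness of $X$ enters decisively. Given $h\in C(X,\R)$, the image $h(X)$ is compact and hence bounded below, so there exists $n\in\Z$ with $n<\inf_{x\in X}h(x)$; then $f:=h-n$ lies in $C(X,(0,\infty))=C(X,Y)$ and $\Xi([n,f])=h$. Combining injectivity, continuity, openness, and surjectivity of $\Xi$ yields the desired $G$-homeomorphism. Had $X$ failed to be compact, a continuous $h\colon X\to\R$ need not be bounded below and this final step would collapse, which is why compactness is indispensable.
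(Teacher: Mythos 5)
Your proposal is correct, and it uses the same underlying map $[n,f]\mapsto n+f$ and the same compactness argument for surjectivity as the paper; the difference lies in how the topological properties of that map are justified. The paper works directly at the level of $G\times C(X,Y)$: it checks that $\phi(n,f)=f+n$ is open, continuous and surjective, and that its fibers are exactly the classes of the relation $R_C$, so that the induced map on the quotient is automatically a homeomorphism (openness of the quotient map, from \cite[Proposition 3.9]{KL}, doing the work). You instead verify well-definedness, injectivity and equivariance by hand, and then import continuity and openness by factoring your map through the open embedding $\xi\colon C(X,Y)_G\to C(X,Y_G)$ of Theorem~\ref{embed} followed by the homeomorphism $C(X,Y_G)\cong C(X,\R)$ coming from $Y_G\cong\R$. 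Both routes are valid; the paper's is more self-contained and slightly shorter, while yours reuses the already-established machinery and makes explicit that the only genuinely new input beyond Theorem~\ref{embed} is surjectivity, i.e.\ that in this example the open subset $\xi(C(X,Y)_G)$ is all of $C(X,Y_G)$. One small point of care: in your converse direction for injectivity you only treat the case $n\geq m$, where $C(X,Y)_{m-n}=C(X,Y)$ is automatic; for $n<m$ you still need $f(x)>m-n$ for all $x$, which follows from $g=(n-m)+f$ taking values in $(0,\infty)$ (or from the symmetry of $R_C$), and this should be said explicitly.
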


\begin{proof}
Consider the open continuous map
	\[
	\phi : G\times C(X,Y)\ni (n,f)\mapsto f+n\in C(X,\mathbb R), 
	\]
where $(f+n)(x)=f(x)+n,$ for all $x\in X.$  

Observe that $\phi$ is surjective. Indeed, given $F\in C(X,\mathbb R),$ we  set
$m=\min_{x\in X}F(x),$  which exists because $X$ is compact. Let $n=\lfloor m\rfloor-1$.
Then the map $f:=F-n$ belongs to $C(X,Y)$, and $\phi(n,f)=f+n=F$. 

Now, for $(n,f), (m,g)\in G\times C(X,Y)$ we have $\phi(n,f)=\phi(m,g)$ if and only if $f+n=g+m$; that is, 
	\begin{center}
		$\phi(n,f)=\phi(m,g)$ if and only if  $(n,f)R^{\hat\theta}(m,g)$.
	\end{center}
Therefore, $\phi$ induces a homeomorphism
 	\begin{center}
 		$\Phi:C(X,Y)_G\ni[n,f]\mapsto n+f\in C(X,\mathbb R)$.
 	\end{center}
 Moreover, since $\Phi$ is a $G$-homeomorphism, we conclude that $C(X,\mathbb{R})$ and $C(X,Y)_G$ are $G$-homeomorphic.
\end{proof}

\subsection{Globalization and ANEs} 
				
Let $X$ be a topological space equipped with a continuous global action of a topological group $G$. Recall that $X$ is called an {\it equivariant absolute neighborhood extensor} (respectively, an {\it equivariant absolute extensor}) if for every metrizable space $Z$ endowed with a continuous global action of $G$, and for every closed $G$-subset $A$ of $Z$, any $G$-map $f:A\rightarrow X$ extends to a $G$-map $F:U\rightarrow X$, where $U$ is an invariant neighborhood of $A$ in $Z$ (respectively, $U=Z$). In this case, we say that $X$ is a $G$-ANE (respectively, a $G$-AE).

Some of the partial actions studied in this work have already been considered in the global setting, along with their relationship with $G$-AEs and $G$-ANEs, as illustrated in the following example.

\begin{example}$~$
\rm{
\begin{enumerate}
		\item[(i)] Let $G$ be a compact Lie group and $X$ be a metrizable $G$-space equipped with a continuous global action $\theta:G\times X\rightarrow X$. It follows from Proposition \ref{homosemi1} (see also \cite[Theorem 3.2]{MPR}) that the map
		\[
		\hat{\theta}:G\times\mathcal{K}(X)\ni (g,A)\mapsto \theta(\{g\}\times A)\in\mathcal{K}(X)
		\]
		is a continuous global action of $G$ on $\mathcal{K}(X)$. A fundamental result proved in \cite[Theorem 1.1]{A4} states that $X$ is locally continuum-connected (respectively, connected and locally continuum-connected) if and only if $\mathcal{K}(X)$ is a $G$-ANE (respectively, a $G$-AE).
			
		\item[(ii)] Let $G$ be a compact group and $L$ be a Banach space equipped with a continuous global action $\theta$ such that $\theta_g:L\rightarrow L$ is linear for each $g\in G$. If $V\subset L$ is a closed invariant convex subset, then it follows from \cite[Theorem 3.2]{A2} that $V$ is a $G$-AE. The proof of this result makes use of the following equivariant map:
		\[
		S:C(G,V)\rightarrow V,\qquad S(f)=\int_G f\, dm,
		\]
		where $m$ is the Haar measure on $G$, and the integral is taken in the sense of \cite[Definition 3.26]{WR}. Here, $C(G,V)$ is equipped with the continuous global action $\hat{\theta}$ defined in \eqref{eq paC(x,y)}.
\end{enumerate}
}
\end{example}

\begin{proposition}
Let $X$ be a compact space, and let $Y$ be a topological space equipped with a continuous global action $\theta$ of a compact group $G$. 
Then $Y$ is a $G$-AE (respectively, a $G$-ANE) if and only if $C(X,Y)$ is a $G$-AE (respectively, a $G$-ANE), where $C(X,Y)$ is endowed with the continuous global action $\widehat{\theta}$ defined in~\eqref{eq paC(x,y)}.
\end{proposition}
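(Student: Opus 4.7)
My plan is to prove the equivalence by showing that $Y$ is an equivariant retract of $C(X,Y)$ (which handles the backward implication) and then using the exponential law for the compact-open topology to transport the extension property in the forward direction.

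For the backward implication, assume $C(X,Y)$ is a $G$-AE (resp.\ $G$-ANE). Fix any point $x_0\in X$. The map $c\colon Y\to C(X,Y)$ from~\eqref{jm} and the evaluation map $\mbox{ev}_{x_0}\colon C(X,Y)\to Y$ are both continuous $G$-maps (as already noted in Subsection~\ref{globali}), and their composition satisfies $\mbox{ev}_{x_0}\circ c=\mbox{id}_Y$. Consequently, given any $G$-map $f\colon A\to Y$ defined on a closed $G$-subset $A$ of a metrizable $G$-space $Z$, the composition $c\circ f\colon A\to C(X,Y)$ extends to a $G$-map into $C(X,Y)$ (either globally or on an invariant neighborhood $U$ of $A$), and post-composing with $\mbox{ev}_{x_0}$ yields the required extension of $f$. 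This simultaneously handles both the AE and ANE cases.

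For the forward implication, assume $Y$ is a $G$-AE (resp.\ $G$-ANE). Given a closed $G$-invariant subset $A$ of a metrizable $G$-space $Z$ and a $G$-map $f\colon A\to C(X,Y)$, I would use the exponential correspondence, valid because $X$ is compact, to pass to the adjoint map $\tilde f\colon A\times X\to Y$ defined by $\tilde f(a,x)=f(a)(x)$. Endowing $Z\times X$ with the $G$-action that is trivial on the second factor, a direct computation shows that $\tilde f$ is a $G$-map, since $\hat\theta_g$ acts on $C(X,Y)$ by post-composition with $\theta_g$. The product $A\times X$ is a closed $G$-subset of $Z\times X$, so the extension property of $Y$ yields a continuous $G$-equivariant extension $\tilde F\colon W\to Y$, with $W=Z\times X$ in the AE case and $W$ an invariant open neighborhood of $A\times X$ in the ANE case.

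The main technical step, and the principal obstacle I expect, is to replace $W$ by a tube of the form $U\times X$ where $U$ is an invariant open neighborhood of $A$ in $Z$. Here I plan to invoke the tube lemma, which is available because $X$ is compact: for each $a\in A$ there exists an open neighborhood $V_a$ of $a$ in $Z$ with $V_a\times X\subset W$; setting $U_0=\bigcup_{a\in A}V_a$ and $U=G\cdot U_0$ gives an open $G$-invariant neighborhood of $A$, and the $G$-invariance of $W$ together with the triviality of the action on $X$ guarantees $U\times X\subset W$. Defining $F(u)(x)=\tilde F(u,x)$ and applying the exponential law in reverse then produces a continuous $G$-map $F\colon U\to C(X,Y)$ whose equivariance follows from that of $\tilde F$ and which extends $f$ by construction. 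The AE case runs identically with $W=Z\times X$ and $U=Z$, so no tube argument is needed there. A subtle auxiliary point I would need to verify is that the product $Z\times X$ falls within the class of test spaces to which the $G$-AE/$G$-ANE property of $Y$ applies; when $X$ is assumed compact metrizable this is automatic, and otherwise one would need to restrict the class of test spaces or invoke a standard reduction to closed subsets of normed $G$-spaces.
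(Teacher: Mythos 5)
Your proposal is correct and follows essentially the same route as the paper: the backward implication via the equivariant retraction $c\circ\mathrm{ev}_{x_0}$ onto $c(Y)$, and the forward implication via the exponential law applied to $A\times X\subset Z\times X$ with trivial action on the $X$-factor, followed by shrinking the invariant neighborhood to a tube $U\times X$ using compactness of $X$ and $G$. The metrizability of the test space $Z\times X$ that you flag as a subtle point is indeed needed and is passed over silently in the paper's proof, which in effect requires $X$ to be compact metrizable.
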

\begin{proof}
First, fix $x\in X$, and consider the evaluation map $\mathrm{ev}_x: C(X,Y)\ni f\mapsto f(x)\in Y$ and the embedding $c:Y\ni y\mapsto c_y\in C(X,Y)$ defined in (\ref{jm}). We have already shown that both $c$ and $\mathrm{ev}_x$ are equivariant. The subset $c(Y)\subset C(X,Y)$ is invariant, and the map $r=c\circ \mathrm{ev}_x:C(X,Y)\rightarrow c(Y)$ is an equivariant retraction, as $r(f)=f$ for all $f\in c(Y)$. Consequently, the space $c(Y)$ is a $G$-AE (respectively, a $G$-ANE) whenever $C(X,Y)$ is. Since $Y$ and $c(Y)$ are $G$-homeomorphic, it follows that $Y$ is a $G$-AE (respectively, a $G$-ANE) provided that  $C(X,Y)$ is.
			
		Conversely, assume that $Y$ is a $G$-ANE. We show that $C(X,Y)$ is a $G$-ANE.  The $G$-AE case is analogous.  Let $Z$ be a metrizable space equipped with a continuous global action of $G$, and let $\phi: A\rightarrow C(X,Y)$ be a $G$-map defined on a closed invariant subset $A\subset Z$. Since $X$ is compact, the map $\Phi:A\times X\ni (a,x)\mapsto\phi(a)(x)\in Y$ is continuous (see, e.g., \cite[Theorem 3.4.3]{RE}).	
			
		Equip $Z\times X$ with the continuous global action $g\cdot (z,x)=(g\cdot z,x),$ for all $g\in G$ and $(z,x)\in Z\times X.$ Then $A\times X$ is closed and invariant, and $\Phi$ is equivariant. Indeed, for $g\in G$ and $(a,x)\in A\times X$ we see that
		\begin{center}
			$\Phi(g\cdot a,x)=\phi(g\cdot a)(x)=\hat{\theta}(g,\phi(a))(x)=\theta(g,\phi(a)(x))=\theta(g,\Phi(a,x))$.
		\end{center}
Since $Y$ is a $G$-ANE, there exist an invariant neighborhood $U$ of $A\times X$ in $Z\times X$ and a $G$-map $\psi: U\rightarrow Y$ extending $\Phi$. Since  $G$ and $X$ are compact spaces, we may assume that $U=V\times X$ for some invariant neighborhood $V$ of $A$ in $Z$. By \cite[Theorem 3.4.3]{RE}, the map $\Psi:V\rightarrow C(X,Y)$ defined by
		\begin{center}
			$\Psi(v)(x)=\psi(v,x),\quad v\in V,\quad x\in X,$
		\end{center}
		is continuous. It remains to show that $\Psi$ is a $G$-map and extends $\phi$.
			For $a\in A$ and $x\in X$ we have
		\begin{center}
			$\Psi(a)(x)=\psi(a,x)=\Phi(a,x)=\phi(a)(x),$
		\end{center}
		showing that $\Psi(a)=\phi(a)$, and thus $\Psi$ extends $\phi$. Moreover, for each $v\in V$, $x\in X$, and $g\in G$ we see that
		\begin{center}
			$\Psi(g\cdot v)(x)=\psi(g\cdot v,x)=\theta(g,\psi(v,x))=\theta(g,\Psi(v)(x))=\hat{\theta}(g,\Psi(v))(x)$
		\end{center}
		which implies that $\Psi(g\cdot v)=\hat{\theta}(g,\Psi(v))$. Therefore, $\Psi$ is equivariant. This completes the proof.
\end{proof}

When \(G\) is trivial, the notions of \(G\)-ANE and \(G\)-AE reduce to those of ANE and AE, respectively. 
For further background on the theory of retracts, we refer the reader to \cite{SH,KS}.

Under the hypotheses of Theorem \ref{embed}, we prove in the following result that the property of being an ANE is preserved between $Y_G$ and $C(X,Y)_G$; more precisely, $C(X,Y)_G$ is an ANE if and only if $Y_G$ is.

\begin{theorem}\label{ane}
\rm{Let $\theta$ be a nice partial action  of $G$ on a topological space $Y$, and let $X$ be a compact metrizable space. Then the following statements are equivalent:
						
	\begin{enumerate}
	\item [(i)] $Y$ is an ANE.\vspace{0.2cm}
	\item [(ii)] $Y_G$ is an ANE.\vspace{0.2cm}
	\item [(iii)] $C(X,Y)_G$  is an ANE.\vspace{0.2cm}
	\item [(iv)] $C(X,Y)$ is an ANE.
	\end{enumerate}}

\end{theorem}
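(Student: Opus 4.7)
The plan is to prove the four equivalences as a cycle, leaning on five ingredients. First, the preceding proposition, specialized to the trivial group, gives $C(X,Z)$ ANE $\iff$ $Z$ ANE for every topological space $Z$, since $X$ is compact metrizable. Second, Theorem \ref{embed} realizes $C(X,Y)_G$ as an open subspace of $C(X,Y_G)$. Third, Lemma \ref{embedding} realizes $Y_G$ as a retract of $C(X,Y)_G$ through the continuous equivariant maps $J$ and $K$ with $K\circ J=\mathrm{id}_{Y_G}$. Fourth, niceness (of $\theta$ by hypothesis, of $\hat\theta$ by Proposition \ref{nice}) makes both $\iota^\theta\from Y\to Y_G$ and $\iota^{\hat\theta}\from C(X,Y)\to C(X,Y)_G$ open embeddings. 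Fifth, I will invoke the classical heredity facts for ANE: open subspaces and retracts of ANEs are ANEs, and Hanner's theorem supplies the local-to-global direction for metrizable spaces.

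Using these, $(i)\iff(iv)$ is immediate from the first ingredient. For $(ii)\iff(iii)$, applying the first ingredient to $Y_G$ shows $Y_G$ is ANE iff $C(X,Y_G)$ is ANE; combining with Theorem \ref{embed} and heredity under open subspaces yields $(ii)\Rightarrow(iii)$, while Lemma \ref{embedding} with heredity under retracts yields $(iii)\Rightarrow(ii)$. The implications $(ii)\Rightarrow(i)$ and $(iii)\Rightarrow(iv)$ come from the fourth and fifth ingredients, since $\iota^\theta(Y)$ and $\iota^{\hat\theta}(C(X,Y))$ are open subspaces of $Y_G$ and $C(X,Y)_G$ respectively, and are homeomorphic to $Y$ and $C(X,Y)$.

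The last step closing the cycle is $(i)\Rightarrow(ii)$. Assuming $Y$ is ANE, observe that $G$ acts globally on $Y_G$ by homeomorphisms and $\iota^\theta(Y)$ is open in $Y_G$, so the family $\{g\cdot \iota^\theta(Y)\}_{g\in G}$ is an open cover of $Y_G$ by subsets each homeomorphic to $Y$, hence each an ANE. Therefore $Y_G$ is locally ANE, and Hanner's theorem then delivers $(ii)$.

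The principal obstacle will be securing the metrizability hypothesis required by Hanner's theorem. I expect to handle it by first establishing metrizability of $Y_G$ from the ANE hypothesis on $Y$ together with Corollary \ref{regme}, or alternatively by carrying out the local-to-global passage by hand: given a closed subset $A$ of a metrizable space $Z$ and a continuous $f\from A\to Y_G$, take a locally finite refinement of the open cover $\{f^{-1}(g\cdot\iota^\theta(Y))\}_{g\in G}$ of $A$ in an open neighborhood of $A$ in $Z$ by paracompactness, extend $f$ on each member using the ANE property of the corresponding translate, and glue these local extensions via a partition of unity.
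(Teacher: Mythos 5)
Your proposal is correct and follows essentially the same route as the paper, which proves the cycle (i)$\Rightarrow$(ii)$\Rightarrow$(iii)$\Rightarrow$(iv)$\Rightarrow$(i) from exactly your ingredients: the open cover $\{\mu_g(\iota(Y))\}_{g\in G}$ of $Y_G$ together with the union theorem for ANEs for (i)$\Rightarrow$(ii); the fact that $C(X,\cdot)$ preserves ANEs combined with Theorem~\ref{embed} and open-subspace heredity for (ii)$\Rightarrow$(iii); the open embedding $\iota^{\hat\theta}$ (via Proposition~\ref{nice}) for (iii)$\Rightarrow$(iv); and the retraction $c\circ \mathrm{ev}_x$ for (iv)$\Rightarrow$(i). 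Your worry about metrizability in the local-to-global step is unnecessary: the union theorem the paper invokes (\cite[Chapter II, Theorem 17.1]{SH}) only requires the \emph{test} spaces in the definition of ANE to be metrizable, not $Y_G$ itself, so no appeal to Corollary~\ref{regme} or a hand-made partition-of-unity argument is needed.
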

				
\begin{proof}
	
(i) $\Rightarrow$ (ii) Consider the open embedding $\iota: Y\rightarrow Y_G$ defined in (\ref{iota}), and let $\mu: G\times Y_G\rightarrow Y_G$ be the enveloping  action of $G$ on $Y$ defined in \eqref{action}. Observe that $Y=\bigcup_{g\in G}\mu_g(\iota(Y))$. Moreover, for each $g \in G$, the space $\mu_g(\iota(Y))$ is homeomorphic to $Y$. Hence, by \cite[Chapter II, Theorem 17.1]{SH}, if $Y$ is an ANE then $Y_G$ is also an ANE.

To prove (ii) $\Rightarrow$ (iii), suppose that $Y_G$ is an ANE. From \cite[Chapter VI, Theorem 2.4]{SH} we have  that $C(X,Y_G)$ also is ANE. Furthermore, by Theorem \ref{openembedding}, the enveloping space $C(X, Y)_G$ is homeomorphic to an open subset of $C(X, Y_G)$. Thus, by \cite[Chapter II, Theorem 6.1]{SH}, we conclude that $C(X, Y)_G$ is an ANE.

(iii) $\Rightarrow$ (iv) Now suppose that $C(X, Y)_G$ is an ANE. By Proposition~\ref{nice}, the partial action $\hat{\theta}$ on $C(X, Y)$ is nice. Then the map $\iota^{\hat{\theta}} : C(X, Y) \rightarrow C(X, Y)_G$ is an open embedding. Hence, by \cite[Chapter VI, Theorem 2.4]{SH}, we conclude that $C(X, Y)$ is an ANE. 

Finally, to prove (iv) $\Rightarrow$ (i), suppose that $C(X, Y)$ is an ANE. Since $Y$ is homeomorphic to a retract of $C(X, Y)$, using \cite[Chapter II, Proposition 5.2]{SH} we obtain that $Y$ is an ANE. 
\end{proof} 
We finish our work with the next.
\begin{example}{\rm

Consider the group  $G=\rm{GL}(2;\mathbb{R})$ acting  partially   on $\mathbb{R},$  where 

	\begin{center}
		$G*\mathbb R=\left\{(g,x)\in G\times \mathbb R\mid g=\begin{pmatrix}
			a & b \\
			c & d 
		\end{pmatrix} \text{and } cx+d\neq 0\right\}$
	\end{center}
	and 
	$$\theta:G*\mathbb R\ni (g,x)\mapsto \frac{ax+b}{cx+d}\in \mathbb R.$$
It follows by \cite[Lemma 3.15]{KL}  that $\mathbb{R}_G$ is homeomorphic to $\mathbb{S}^1=\{x\in \mathbb{R}^2 \mid \|x\|=1\}$. By the Dugundji Extension Theorem \cite[Theorem 6.1.1]{KS}, we know that $\mathbb{R}$ is an AE. Hence, it follows from Theorem \ref{ane} that, for any compact metrizable space $X$, the space $C(X,\mathbb{R})_G$ is an AE.}

\end{example}

\section{Acknowledgments}
The first named author was supported by grant 829945 from SECIHTI (M\'exico). The third named author was supported by PRPI da
Universidade de São Paulo, process n°: 22.1.09345.01.2.

\end{document}